\newtheorem{theorem}{Theorem}[section]
\newtheorem{definition}[theorem]{Definition}
\newtheorem{corollary}[theorem]{Corollary}
\newtheorem{proposition}[theorem]{Proposition}
\newtheorem{remark}[theorem]{Remark}
\newtheorem{example}[theorem]{Example}
\newcommand{\cmark}{\ding{51}}%
\newcommand{\xmark}{\ding{55}}%
\newcommand{\setdef}[2]{\left\{#1 \; \mid \; #2\right\}}
\newcommand{\until}[1]{\{1,\dots,#1\}}
\newcommand{\fromto}[2]{\{#1,\dots,#2\}}
\newcommand{\Cc}{\mathcal{C}}
\newcommand{\Dc}{\mathcal{D}}
\newcommand{\Fc}{\mathcal{F}}
\newcommand{\Ic}{\mathcal{I}}
\newcommand{\Kc}{\mathcal{K}}
\newcommand{\Lc}{\mathcal{L}}
\newcommand{\Nc}{\mathcal{N}}
\newcommand{\Pc}{\mathcal{P}}
\newcommand{\Uc}{\mathcal{U}}
\newcommand{\Vc}{\mathcal{V}}
\newcommand{\Xc}{\mathcal{X}}
\newcommand{\Yc}{\mathcal{Y}}
\newcommand{\real}{\mathbb{R}}
\DeclareSymbolFont{bbold}{U}{bbold}{m}{n}
\DeclareSymbolFontAlphabet{\mathbbold}{bbold}
\newcommand{\norm}[1]{\left\lVert#1\right\rVert}
\newcommand{\proj}{\operatorname{proj}}
\newcommand\oprocendsymbol{\hbox{$\bullet$}}
\newcommand\oprocend{\relax\ifmmode\else\unskip\hfill\fi\oprocendsymbol}
\newcommand*{\QEDA}{\hfill\ensuremath{\blacksquare}}%
\newcommand\xqed[1]{%
  \leavevmode\unskip\penalty9999 \hbox{}\nobreak\hfill
  \quad\hbox{#1}}
\newcommand\demo{\xqed{$\bullet$}}
\newcommand{\longthmtitle}[1]{\mbox{}\emph{(#1):}}
\begin{document}

\begin{frontmatter}
  
  \title{\bf Regularity Properties of Optimization-Based Controllers}
  
  \author[1]{Pol Mestres\corref{cor1}}
  \ead{pomestre@ucsd.edu}
  \author[2]{Ahmed Allibhoy}
  \ead{aallibho@ucr.edu}
  \author[1]{Jorge Cort{\'e}s}
  \ead{cortes@ucsd.edu}
  \cortext[cor1]{Corresponding author}
  \address[1]{Department of Mechanical and Aerospace Engineering,
    University of California, San Diego, 9500 Gilman Dr, La Jolla, CA 92093}
  
    \address[2]{Department of Mechanical Engineering,
    University of California, Riverside, 900 University Ave., Riverside, CA 92521}

  % \author[UCSD]{Pol Mestres}\ead{pomestre@ucsd.edu}
  % \quad \author[UCSD]{Ahmed Allibhoy}\ead{aalibho@ucsd.edu}
  % \quad \author[UCSD]{Jorge Cort{\'e}s}\ead{cortes@ucsd.edu}
  % \address[UCSD]{Department of Mechanical and Aerospace Engineering,
  % University of California, San Diego.}
  
  \begin{keyword}
    Parametric optimization, optimization-based control, existence and
    uniqueness of solutions
  \end{keyword} 
  
  \begin{abstract}
    This paper studies regularity properties of optimization-based
    controllers, which are obtained by solving optimization problems
    where the parameter is the system state and the optimization
    variable is the input to the system.  Under a wide range of
    assumptions on the optimization problem data, we provide an
    exhaustive collection of results about their regularity, and
    examine their implications on the existence and uniqueness of
    solutions and the forward invariance guarantees for the resulting
    closed-loop systems.  We discuss the broad relevance of the
    results in different areas of systems and controls.
    % In particular, we revisit Robinson's
    % counterexample, which shows that, even for relatively well-behaved
    % parametric optimization problems, the corresponding optimizer
    % might not be locally Lipschitz with respect to the parameter. We
    % show that controllers obtained from optimization problems whose
    % objective and constraints have the same properties as those in
    % Robinson's counterexample enjoy regularity properties that
    % guarantee the existence (and in some cases, uniqueness) of
    % solutions of the corresponding closed-loop system.
  \end{abstract}
\end{frontmatter}

\section{Introduction}\label{sec:introduction}

Optimization-based controllers are ubiquitous in numerous areas of
systems and control including safety-critical
control~\citep{ADA-SC-ME-GN-KS-PT:19}, model predictive
control~\citep{GEC-DMP-MM:89,JBR-DQM-MMD:17}, and online feedback
optimization~\citep{MC-EDA-AB:20,AH-SB-GH-FD:21}.  Optimization-based
controllers are a particular class of parametric optimization
problems. The theory of parametric
optimization~\citep{AVF-GPM:90,GS:18,JFB-AS:98} considers optimization
problems that depend on a parameter and studies the regularity
properties of the minimizers with respect to it. In optimization-based
control, the parameter is the state and the optimization variable is
the input.

Given a dynamical system (either in discrete or continuous time) with
state $x \in \real^n$ and input $u \in \real^m$, an optimization-based
controller is a feedback law obtained by solving a problem of the form
\begin{subequations}\label{eq:optim-based-controller}
  \begin{align}
    &   \underset{u\in\real^m}{\text{argmin}} \
    f(x,u)
  \\
  & \qquad \text{s.t.} \ g(x,u) \leq 0
\end{align}  
\end{subequations}
with $f:\real^n\times\real^m\to\real$ and
$g:\real^n\times\real^m\to\real^p$.  Note that the system state $x$
acts as a parameter in~\eqref{eq:optim-based-controller}.  Assuming
that the optimizer of~\eqref{eq:optim-based-controller} is unique for
every $x\in\real^n$, this defines a function $u^*:\real^n\to\real^m$,
mapping each state to the optimizer
of~\eqref{eq:optim-based-controller}.  This approach allows to encode
desirable goals for controller synthesis both in the cost function~$f$
and in the constraints~$g$. For instance, desirable performance
objectives such as minimum control effort or maximizing convergence
rate can be captured by the cost function, whereas the constraint
functions can capture operational limitations on control effort and
prescriptions to ensure properties such as closed-loop stability or
safety.  The flexibility of this synthesis approach makes it
particularly attractive, but we should note the caveat that, in
general, the controller $u^*$ is not available in closed
form. Instead, additional work needs to be performed in order to find
the input by solving the resulting optimization
problem~\eqref{eq:optim-based-controller}.  Independently of the
computational aspects, one needs to ensure that the resulting
controller behaves properly when employed to close the loop on the
dynamical system, hence the importance of the study of the regularity
properties of optimization-based controllers.  Next we present
different examples from the systems and controls literature where such
controllers arise and motivate the importance of studying the
regularity properties of~$u^*$.

\begin{example}\longthmtitle{Control barrier and Lyapunov
    function-based control}\label{ex:safety-critical} 
  \rm{In safety-critical applications, safe controllers are often
    designed through control barrier functions
    (CBF)~\citep{ADA-SC-ME-GN-KS-PT:19}.  Let $\dot{x}=F(x,u)$ be a
    control system with $F:\real^n\times\real^m\to\real^n$ locally
    Lipschitz. Assume that the set of safe states is given by the
    $0$-superlevel set of a continuously differentiable function
    $h:\real^n\to\real$. This function is a CBF if, for every
    $x \in \real^n$, there exists $u \in \real^m$ such that
    $\nabla h(x)^T F(x,u) + \alpha(h(x)) \geq 0$, where
    $\alpha:\real\to\real$ is an extended class $\Kc$
    function\footnote{A function $\alpha:\real\to\real$ is an extended
      class-$\Kc$ function if it is strictly increasing and
      $\alpha(0)=0$.}.  Any Lipschitz controller
    $u_{\text{cbf}}:\real^n\to\real^m$ that satisfies this inequality
    at every $x \in \real^n$ renders the closed-loop system safe
    (i.e., makes the $0$-superlevel set of $h$ forward invariant).
    This inequality can be incorporated as a constraint in an
    optimization problem defining a feedback controller. For example,
    given a nominal controller $k:\real^n\to\real^m$, designed with
    desirable properties such as asymptotic stability of an
    equilibrium point or minimizing a certain infinite-horizon optimal
    control cost, \textit{safety filters}~\citep{LW-ADA-ME:17} seek to
    find the controller closest to $k$ that satisfies the CBF
    constraint. Such controller can be found at every state
    $x\in\real^n$ by solving the following optimization problem:
    \begin{subequations}\label{eq:safety-filter}
      \begin{align}
        &  u_{\text{sf}}^*(x) = \underset{u\in\real^m}{\text{argmin}} \
          \frac{1}{2}\norm{u-k(x)}^2,
        \\
        & \qquad \text{s.t.} \ \nabla h(x)^T F(x,u) + \alpha(h(x)) \geq 0,
      \end{align}  
    \end{subequations}
    where again $\alpha:\real\to\real$ is an extended class $\Kc$
    function.
    % (cf.~\citep{ADA-SC-ME-GN-KS-PT:19}).
    Often, one also seeks to endow safety filters with stability
    guarantees by employing control Lyapunov functions
    (CLF)~\citep{RAF-PVK:96a}.  Given a positive definite function
    $V:\real^n\to\real_{\geq0}$, $V$ is a CLF if, for all
    $x \in \real^n \setminus \{0\}$, there exists $u \in \real^m$ such
    that $\nabla V(x)^T F(x,u) + W(x) \leq 0$, where
    $W:\real^n\to\real$ is a positive definite function.  A locally
    Lipschitz controller $u_{\text{clf}}:\real^n\to\real^m$ that
    satisfies this inequality at every state $x\in\real^n$ renders the
    origin of the closed-loop system asymptotically stable.  Given a
    CLF $V:\real^n\to\real$, one can seek to endow $u_{\text{sf}}$
    with stability guarantees by solving the following optimization
    problem at every $x\in\real^n$:
    \begin{subequations}\label{eq:clf-cbf-controller}
      \begin{align}
        & u_{\text{cc}}^*(x) =  \underset{u\in\real^m}{\text{argmin}} \
          \frac{1}{2}\norm{u-k(x)}^2,
        \\
        & \qquad \text{s.t.} \ \nabla h(x)^T F(x,u) + \alpha(h(x)) \geq 0, \\
        & \qquad \qquad \ \nabla V(x)^T F(x,u) + W(x) \leq 0,
      \end{align}  
    \end{subequations}
    Note that both~\eqref{eq:safety-filter}
    and~\eqref{eq:clf-cbf-controller} are special cases
    of~\eqref{eq:optim-based-controller}.  Similar optimization-based
    control designs of the form~\eqref{eq:optim-based-controller}
    leveraging CLFs and CBFs have been proposed
    in~\citep{ADA-XX-JWG-PT:17,PM-JC:23-csl,YY-SK-BG-NA:23,FC-JJC-BZ-CJT-KS:21,PS-MJ-EH:21,KL-YY-JC-NA:23-acc,AJT-VDD-SD-BR-YY-ADA:21,PM-CNG-JC:24-ral},
    among many others.  
    If the system is control-affine, as it is often the case in practice, 
    then~\eqref{eq:safety-filter} and~\eqref{eq:clf-cbf-controller}
    are quadratic programs (QPs).
    Importantly, $u_{\text{sf}}^*$
    (resp. $u_{\text{cc}}^*$) is only guaranteed to be safe
    (resp. safe and stable) if it is locally Lipschitz. Hence,
    studying the regularity properties of~\eqref{eq:safety-filter}
    and~\eqref{eq:clf-cbf-controller} is critical to ensure that the
    closed-loop system has the desired safety and/or stability
    properties.  Moreover, if $u^*$ is locally Lipschitz, then the
    right-hand side of~\eqref{eq:closed-loop} is locally Lipschitz
    too, and then the Picard-Lindel\"of theorem~\citep[Theorem
    2.2]{GT:12-ams} guarantees existence and uniqueness of solutions
    for small enough times. Similar regularity properties are also
    relevant in the study of the contraction properties of
    optimization-based controllers of the
    form~\eqref{eq:safety-filter} and~\eqref{eq:clf-cbf-controller},
    as shown in~\citep{AD-FB:24}.
  } \demo
\end{example}

\begin{example}\longthmtitle{Online feedback
    optimization}\label{ex:feedback-opt}
  \rm{Here we describe the problem of optimally regulating the
    steady-state output of a plant, a task often referred to as
    \emph{online feedback
      optimization}~\citep{MC-EDA-AB:20,AH-SB-GH-FD:21}. This problem arises
    in a variety of application areas including power
    systems~\citep{ED-AS:16,LSPL-JWSP-EM:21}, network congestion
    control~\citep{SHL-FP-JCD:02}, and traffic
    networks~\citep{GB-JC-JIP-EDA:22-tcns}.
    % slightly modified from the description in \cite{AH-SB-GH-FD:21}.
    In a typical set-up, the plant is modeled with the dynamics
    \begin{align}
      \label{eq:input-output-dynamics}
      \dot{x} &= F(x, u) \notag
      \\ 
      y &= G(x, u)
      \end{align}
    where $G:\real^n \times \real^m \to \real^k$ and $y \in \real^k$
    denote the output.
    %map and output respectively.  
    We assume that there exists a map
    $h:\real^m \to \real^k$, called the \emph{steady-state map}, such
    that for each  constant input $u \in \real^m$ and initial
    condition $x_0 \in \real^n$, the corresponding output of
    \eqref{eq:input-output-dynamics} satisfies $y(t) \to h(u)$ as
    $t \to \infty$. Consider the problem of driving the output to an
    optimal steady-state, formalized by the optimization
    \begin{subequations}\label{eq:optimal-steady-state}
      \begin{align}
        & \underset{u\in\real^m, y \in \real^k}{\text{min}} \quad \
          \Phi(u, y)
        \\ 
        &\text{s.t.} \qquad \qquad \ (u, y) \in \Uc \times \Yc \\ 
        & \qquad \qquad \qquad y = h(u),
      \end{align}  
    \end{subequations}
    where~$\Yc \subset \real^k$ denotes the set of valid
    outputs,~$\Uc \subset \real^m$ denotes the set of valid inputs,
    and~$\Phi:\real^m \times \real^k \to \real$ denotes the cost of
    the corresponding input-output pair.  The problem can equivalently
    be viewed as an optimization over a set of inputs alone by
    eliminating the variable~$y$ from~\eqref{eq:optimal-steady-state}:
    \begin{subequations}\label{eq:optimal-feedforward}
      \begin{align}
        & \underset{u\in\real^m}{\text{min}} \quad \quad \ \Phi(u,
          h(u))
        \\
        &\text{s.t.} \qquad \quad (u, h(u)) \in \Uc \times \Yc.
      \end{align}  
    \end{subequations}
    Note that \eqref{eq:optimal-steady-state} and
    \eqref{eq:optimal-feedforward} are ``static'' problems, i.e., the
    state of the plant does not appear in the cost function or the
    constraints as a parameter.  In practice, however, the
    steady-state map and the plant dynamics are only partially known,
    and subject to external disturbances or model uncertainties. This
    precludes one from directly solving either problem offline and
    simply applying the resulting input
    to~\eqref{eq:input-output-dynamics} (this strategy is called
    \emph{feedforward optimization}).  Instead, one turns the
    ``static'' formulation into a ``dynamic'' one by solving the
    problem online and using real-time measurements of the output of
    the plant in place of a closed-form expression of the steady-state
    output.  Formally, this amounts to replacing the expression~$y$ in
    \eqref{eq:optimal-steady-state}, or~$h(u)$ in
    \eqref{eq:optimal-feedforward}, with~$G(x, u)$:
    \begin{subequations}\label{eq:optimal-feedback}
      \begin{align}
        u^*_\text{ofo}(x) =\
        &\underset{u\in\real^m}{\text{argmin}}\!\! &&f(x, u):=\Phi(u,
                                                      G(x, u))
        \\ 
        &\text{s.t.} && u \in \Uc
        \\
        & && G(x, u) \in \Yc.
    \end{align}  
  \end{subequations}
  The idea is that at each time instant, the output measurement is
  obtained online and fed back into~\eqref{eq:optimal-feedback}, hence
  this strategy is called \emph{online feedback optimization}.
  % 
  % \marginJC{We should skip the ref to an online algo solving (7)
  % here. The issue of computation of opt-based controllers is not
  % what we're discussing in the examples.  Rather, I'd talk about the
  % closed-loop system $\dot{x} = F(x, u^*_\text{ofo}(x))$.}
  We note that~\eqref{eq:optimal-feedback} is of the
  form~\eqref{eq:optim-based-controller}, by rewriting the input and
  output set inclusions as inequalities as
  in~\eqref{eq:optim-based-controller}.  In this setting,
  understanding the regularity properties of the closed-loop dynamics
  $\dot{x} = F(x, u^*_\text{ofo}(x))$ becomes necessary to ensure good
  performance of implementing \eqref{eq:optimal-feedback} on a
  physical plant.  Letting $u_{\text{ss}}$ denote the solution
  to~\eqref{eq:optimal-feedforward}, one would be interested, for
  instance, in showing that
  $u^{*}_{\text{ofo}}(x(t)) \to u_{\text{ss}}$ and
  $y(t) \to h(u_{\text{ss}})$ as $t \to \infty$.  We
    also note that if the cost function
    in~\eqref{eq:optimal-steady-state} is time-varying, the resulting
    ``dynamic'' formulation~\eqref{eq:optimal-feedback} is also
    time-varying. However, the added time-dependence can be treated as
    an extra parameter in the optimization
    problem~\eqref{eq:optimal-feedback}, and therefore the results
    outlined in this paper can also be applied in such time-dependent
    settings.
  % \eqref{eq:optimal-feedback} is critical to characterizing the
  % stability properties and well-posedness of the coupled
  % system. Further, while it is typical to assume that the dynamics
  % \eqref{eq:input-output-dynamics} evolve in a timescale much faster
  % than an online algorithm solving
  % \eqref{eq:optimal-feedforward}~\citep{AH-SB-GH-FD:20}, this
  % assumption may not be satisfied and differentiability of
  % \eqref{eq:optimal-feedback} may be required for online
  % implementation~\citep{MP-SB-FD:22}
} \demo
\end{example}

\begin{example}\longthmtitle{Optimization algorithms as dynamical
    systems}\label{ex:optim-problems-dynamical-sys}
  \rm{Optimization algorithms can be viewed from the lens of
    dynamical systems~\citep{KA-LH-HU:58,RWB:91,UH-JBM:94}.  In some
    cases, such dynamical systems are designed using ideas from
    optimization-based control.  Here we discuss the \textit{safe
      gradient flow}~\citep{AA-JC:24-tac}. Consider a constrained
    optimization problem of the form
    \begin{subequations}
      \label{eq:opt-pb-static}
      \begin{align}
        &\min\limits_{x\in\real^n} \quad \bar{f}(x),
        \\
        &\text{s.t.} \qquad \bar{g}(x) \leq 0
    \end{align}
  \end{subequations}
  where $\bar{f}:\real^n\to\real$ and $\bar{g}:\real^n\to\real^m$ are
  continuously differentiable functions with Lipschitz continuous
  gradients, and $\frac{\partial \bar{g}}{\partial x}$ has full rank
  for all $x \in \real^n$.  We consider the problem of designing a
  continuous-time dynamical system such that the feasible
  set~$\Cc = \{x \in \real^n \mid \bar{g}(x) \leq 0 \}$ is forward
  invariant and trajectories converge to solutions
  to~\eqref{eq:opt-pb-static}.  To solve this problem, we consider the
  integrator system,
  \begin{align}
    \label{eq:sgf-dynamics}
    \dot{x} = F(x, \xi) = \xi,
  \end{align}
  along with the feedback controller
  \begin{subequations}
    \label{eq:safe-gradient-flow}
    \begin{align}
      \xi^*_\alpha(x) =
      & \underset{\xi\in\real^n}{\text{argmin}} \
        \frac{1}{2}\norm{\xi+\nabla \bar{f}(x)}^2
      \\
      & \text{s.t} \quad \frac{\partial \bar{g}(x)}{\partial x}\xi
        \leq -\alpha \bar{g}(x),
    \end{align}
  \end{subequations}
  where $\alpha > 0$ is a design parameter. The closed-loop dynamics
  are referred to as the \emph{safe gradient flow} (cf. \citep[Section
  IV.A]{AA-JC:24-tac} for an alternative derivation of the safe
  gradient flow using techniques from the theory of control barrier
  functions).  We note again that~\eqref{eq:safe-gradient-flow} is of
  the form~\eqref{eq:optim-based-controller}.  Establishing regularity
  properties of $\xi^*_{\alpha}$ such as continuity or local
  Lipschitzness is critical for the solutions
  of~\eqref{eq:safe-gradient-flow} to exist and be unique.  These
  properties are
  % studied in\footnote{In general, the cited result also requires as
  % an additional assumption the constant rank condition, cf. Section
  % \ref{sec:constraint-qualifications}, though this was not stated
  % explicitly in \citep{AA-JC:24-tac}.}~\citep[Proposition
  % 5.3]{AA-JC:24-tac} and are
  %
  %   \marginJC{I don't think there is a need to be so precise here:
  %   we're easing the reader into the paper w/ examples.}
  %
  then leveraged to study the convergence of the solutions
  of~\eqref{eq:safe-gradient-flow} to the optimizers
  of~\eqref{eq:opt-pb-static} while ensuring that the feasible set is
  forward invariant.  }\demo
\end{example}

\begin{example}\longthmtitle{Projected Dynamical Systems}
  \rm{Projected dynamical systems are a class of systems whose
    evolution is constrained to remain inside a subset
    $\Cc \subset \real^n$.  They are are widely used for analyzing and
    solving nonlinear programs and variational inequalities
    \citep{AN-DZ:96} and have wide-ranging applications including 
    network economics (e.g., for analyzing supply chain networks or
    financial markets) \citep{AN:09}, power networks
    \citep{ED-AS:16,DKM-FD-SH-SHL-SC-BR-JL:17}, anti-windup
    controllers for feedback optimization \citep{AH-FD-AT:20}, and
    traffic flows \citep{AN-DZ:97}, to name a few.  While
    projected dynamical systems have been considered in quite general
    settings, such as on Riemmanian manifolds \citep{AH-SB-FD:21}, or
    with respect to oblique projections \citep{WPMHH-MKC-MFH:20}, here
    we restrict ourselves to the Euclidean case. In this case,
    projected dynamical systems typically take the form
    \begin{equation}
      \label{eq:PDS}
      \dot{x} = \Pi_\Cc[x, H(x)]
    \end{equation}
    where $H:\real^n \to \real^n$ is a vector field, and
    $v \mapsto \Pi_\Cc[x, v]$ is the projection onto the tangent cone
    of~$\Cc$:
    % (cf. Definition \ref{def:tangent-cone}):
    \[
      \Pi_\Cc[x, v] = \proj_{T_\Cc(x)}(v).
    \]
    Recently, projected dynamical systems have been reinterpreted from
    the viewpoint of control theory, to design anytime flows solving
    variational inequalities \citep{AA-JC:23-auto}, and for
    understanding their relationship to controllers obtained using
    techniques from the theory of control barrier functions
    \citep{GD-JC-WPMHH:24}. For example,
    \eqref{eq:PDS} can be interpreted as the closed-loop dynamics
    corresponding to the system,
    \begin{equation}
      \label{eq:PDS-open-loop}
      \dot{x} = H(x) + u 
    \end{equation}
    with the feedback controller
    \begin{subequations}\label{eq:PD-feedback}
      \begin{align}
        u_\text{proj}^*(x) =\
        &\underset{u\in\real^m}{\text{argmin}}\!\! &&\norm{u}^2
        \\ 
        &\text{s.t.} && H(x) + u \in T_\Cc(x).
      \end{align}  
    \end{subequations}
    Even though feedback controllers of the form
    \eqref{eq:PD-feedback} are discontinuous, the resulting
    closed-loop system may still be well behaved.  In the case where
    $\Cc$ is convex, one can show existence and
    uniqueness~\citep[Theorem 2.5]{AN-DZ:96} of Carath\'eodory
    solutions (cf. \citep{JC:08-csm} for notions of solutions to
    discontinuous systems), and forward invariance of the set $\Cc$
    \citep[Corollary 4.8]{FB-SM:08}. With the additional assumption of
    strong monotonicity of $H$, asymptotic stability of the unique
    equilibrium~\eqref{eq:PDS} follows as well.  The control-theoretic
    interpretation of projected dynamical systems highlights the
    complex relationship between the regularity properties of
    optimization-based feedback controllers and the dynamical
    properties of the resulting closed-loop system. In particular, it
    shows that from the perspective of control design, a feedback
    controller may achieve its intended objective (e.g., ensuring
    invariance of a safe set or stabilization to a desired equilibrium
    point) with relatively weak regularity properties.
    % {\color{red} This motivates the following question: what are the
    % minimal assumptions required on the objective function and
    % constraints in \eqref{eq:optim-based-controller} to accomplish a
    % given control objective?
    % % While many open directions remain, we begin to tackle this
    % % question in the context of safety-critical control in
    % % Section~\ref{sec:forward-invariance}.}
    % }
  }
  %
  % \marginJC{This is a general question, which is not specific to this
  %   example. Also, it is connected to one of my margins at the
  %   beginning when we introduce  optimization-based controllers. So
  %   maybe we need a general discussion of these issues/questions? In
  %   this particular example (and in others), we could allude to them
  %   (like we do right now in the first Example to motivate the study
  %   of regularity properties).}
  %
  \demo
\end{example}

\begin{example}\longthmtitle{Model predictive control}\label{ex:mpc}
  \rm{Here we explain how~\eqref{eq:optim-based-controller} is also
    applicable to model predictive controllers. Consider a
    discrete-time dynamical system
    \begin{align}\label{eq:discrete-time}
      x^+ = F(x, u),
    \end{align}
    where $x \in \real^n$ and $u \in \real^m$.  We consider the
    problem of optimally controlling \eqref{eq:discrete-time} to
    minimize a running cost $\ell(x, u)$ while ensuring the state and
    input satisfy constraints $x \in \Xc \subset \real^n$ and
    $u \in \Uc \subset \real^m$.  Model predictive control is a
    method for solving this problem by solving a finite-horizon
    optimal control problem and implementing its solution over
    \eqref{eq:discrete-time} in a receding horizon fashion. Here we
    show that model predictive control schemes can be interpreted as a
    discrete-time analog of optimization-based feedback control
    discussed in previous examples.  Let $N > 0$ be a time horizon,
    and
    $\textbf{x} = (\hat{x}_0, \hat{x}_1, \dots, \hat{x}_{N}) \in
    \real^{n(N + 1)}$ and
    $\textbf{u} = (\hat{u}_0, \hat{u}_1, \dots, \hat{u}_{N - 1}) \in
    \real^{mN}$ denote the state and input prediction sequences over
    the time horizon.  Consider the following optimization problem
    \begin{subequations}\label{eq:mpc-controller}
      \begin{align}
        \textbf{u}_{\text{mpc}}^*(x) =\
        &\underset{\textbf{u}, \textbf{x}}{\text{argmin}} \
        &&\!\!\!\!\!\!\!V_N(\hat{x}_N) +
           \sum_{k=0}^{N-1}\ell(\hat{x}_k, \hat{u}_k)
        \\
        &\text{s.t.} &&\hat{x}_{k+1} = F(\hat{x}_{k}, \hat{u}_{k})
        \\
        & && \hat{x}_k \in \Xc,\ \hat{u}_k \in \Uc
        \\
        & && \hat{x}_N \in \Xc_f
        \\
        & && \hat{x}_0 = x
        \\
        & && k\in\{0,\hdots,N-1 \},
      \end{align}  
    \end{subequations}
    where $V_N:\real^n \to \real$ and $\Xc_f \subset \real^n$ denote
    the terminal cost and terminal constraint respectively (we refer
    the reader to~\citep{JBR-DQM-MMD:17} for conditions on these
    ingredients to ensure closed-loop stability.)  Next, consider the
    augmented system
    \begin{subequations}
      \label{eq:dt-augmented}
      \begin{align}
        x^+ &= \bar{F}(x, \textbf{u})
        \\
        &= F(x, \hat{u}_0),
      \end{align}
    \end{subequations}
    which simply corresponds to implementing the first input in the
    sequence $\textbf{u}$ to~\eqref{eq:discrete-time}. Note that
    \eqref{eq:mpc-controller} is a parametric optimization problem,
    where the parameter corresponds to the current
    state~$x$. Model predictive control studies 
    the closed-loop system obtained by the dynamics
    \eqref{eq:dt-augmented} and the controller
    \eqref{eq:mpc-controller}. As shown in~\citep{POMS-JBR-ESM:97},
    establishing the continuity properties
    of~\eqref{eq:mpc-controller} is critical in proving stability and
    robustness properties of MPC-based controllers.
}\demo
\end{example}

As motivated by the examples provided above, studying the regularity
properties of $u^*$ is critical in order to establish different
properties of interest for the closed-loop system, such as
\begin{enumerate}
\item existence and uniqueness of solutions (for different notions of
  solution, such as classical, Carathéodory, and Filippov);
\item dynamical properties such as forward invariance of safe sets or
  stabilization to an equilibrium point;
  % with minimal assumptions on the objective function and constraints
  % in~\eqref{eq:optim-based-controller};
\item convergence of optimization algorithms such as the safe gradient
  flow;
\item good performance of online feedback optimization-based
  controllers;
\item stability and robustness properties of MPC based controllers.
\end{enumerate}
Additionally, from a practical point of view, guaranteeing regularity
properties for $u^*$ such as continuity or local Lipschitzness is
useful to ease the implementation of such controllers on digital
platforms and avoid chattering behavior.

\subsection{State of the Art}
% Additionally, since uniqueness of solutions is a fundamental
% assumption in Nagumo's theorem~\citep{MN:42} to establish forward
% invariance of a safe set $\Cc$ of interest, the use of non-Lipschitz
% controllers might result in closed-loop systems for which, even if
% the rest of the assumptions of Nagumo's theorem are met, some of the
% solutions might leave $\Cc$.
Having established the importance of characterizing the regularity
properties of~\eqref{eq:optim-based-controller}, we next discuss the
state of the art.  There are a variety of works in the
literature~\citep{BJM-MJP-ADA:15,AI-MG-RGS-WED:22,PM-JC:24-auto,PM-KL-NA-JC:24-csl}
and~\citep[Theorem 2.7]{JBR-DQM-MMD:17} that use the theory of
parametric optimization to guarantee local Lipschitz continuity or
other regularity properties of optimization-based controllers.  For
example, the results in~\citep{BJM-MJP-ADA:15} give different
conditions that ensure continuity and continuous differentiability of
optimization-based controllers.  However, they either require the
rather strong assumption of \textit{strict complementary slackness},
which is not satisfied in many cases of interest, or are limited to
quadratic programs that satisfy a set of technical conditions. The
paper~\citep{BJM-MJP-ADA:15} also revisits Robinson's counterexample,
first introduced in~\citep{SMR:82}, in the context of
optimization-based control, which shows that even for optimization
problems defined by well-behaved data (e.g., second-order continuously
differentiable
% quadratic programs with
% smooth
objective function and constraints, strongly convex objective
function, and feasible set with non-empty interior, which are widely
employed in the design of safe and stabilizing controllers,
cf. Example~\ref{ex:safety-critical}), the resulting controller might
not be locally Lipschitz.  The result in~\citep[Theorem
3]{AI-MG-RGS-WED:22} is more general but only ensures continuity under
\textit{Slater's condition} and other regularity properties on the
optimization problem. The regularity results in our previous
work~\citep{PM-JC:24-auto,PM-KL-NA-JC:24-csl} establish different
Lipschitz continuity results for second-order convex programs, but are
limited to this specific type of optimization problems.
Finally,~\citep[Theorem 2.7]{JBR-DQM-MMD:17} only guarantees
continuity of optimization-based controllers derived from MPC. We also
note that in some cases, $u^*$ can be computed in closed-form, in
which case the regularity properties of $u^*$ can be evaluated
directly without having to resort to the theory of parametric
optimization. Examples of such explicitly computable
optimization-based controllers are provided
in~\citep{XT-DVD:24,PM-JC:23-csl,YY-SK-BG-NA:23,MA-NA-JC:23-tac}
and~\citep[Chapter 7]{JBR-DQM-MMD:17}.  We would also like to point
out that even though this work is mostly focused on control laws
obtained as the solution of optimization problems of the
form~\eqref{eq:optim-based-controller}, the regularity properties of
other control designs has also been studied in the literature.  For
example, the celebrated Sontag's Universal Formula~\citep{EDS:89}
provides a smooth control law for stabilization of control-affine
systems. More recently, similar designs have been given in the context
of safety-critical
control~\citep{MC-PO-GB-ADA:23,PM-JC:24-auto,ML-ZS-SW:24,PO-JC:19-cdc}.

\begin{table*}[htbp]
  \centering
  \small
  \begin{tabular}{ |c | c | c | c |}
    \hline
    Assumptions & Regularity of $u^*$ & Existence & Uniqueness \\
    \hline
    \begin{tabular}{@{}c@{}} $f, g$ analytic \\ $f(x,\cdot)$ strictly convex $\forall x\in\real^n$ \\ $g(x,\cdot)$ convex $\forall x\in\real^n$ \\ Existence of minimizer \\ LICQ and SCS \end{tabular}
    & \begin{tabular}{@{}c@{}} analytic \\ cf.~\citep{AVF:76} \end{tabular} & \cmark & \cmark \\
    \hline
    \begin{tabular}{@{}c@{}} $f, g\in\Cc^{p}(\real^n\times\real^m)$ \\ $p\in\mathbb{Z}_{>0}, p\geq2$ \\ $f(x,\cdot)$ strictly convex \\ $g(x,\cdot)$ convex \\ Existence of minimizer \\ LICQ and SCS \end{tabular}
    & \begin{tabular}{@{}c@{}} $\Cc^{p-1}$ \\ cf.~\citep{AVF:76} \end{tabular} & \cmark & \cmark \\
    \hline
    \begin{tabular}{@{}c@{}} $f, g\in\Cc^{2}(\real^n\times\real^m)$ \\ $f(x,\cdot)$ strictly convex $\forall x\in\real^n$ \\ $g(x,\cdot)$ convex $\forall x\in\real^n$ \\ Existence of minimizer, LICQ \end{tabular}
    & \begin{tabular}{@{}c@{}} Locally Lipschitz \\ cf.~\citep{SMR:80} \end{tabular} & \cmark & \cmark \\
    \hline
    \begin{tabular}{@{}c@{}} $f, g\in\Cc^{2}(\real^n\times\real^m)$ \\ $f(x,\cdot)$ strictly convex $\forall x\in\real^n$ \\ $g(x,\cdot)$ convex $\forall x\in\real^n$ \\ Existence of minimizer \\ CR and MFCQ \end{tabular}
    & \begin{tabular}{@{}c@{}} Locally Lipschitz \\ cf.~\citep{JL:95} \end{tabular} & \cmark & \cmark \\
    \hline
    \begin{tabular}{@{}c@{}} $f, g\in\Cc^{2}(\real^n\times\real^m)$ \\ $f(x,\cdot)$ strongly convex $\forall x\in\real^n$ \\ $g(x,\cdot)$ convex $\forall x\in\real^n$ \\ Existence of minimizer, SC \end{tabular}
    & \begin{tabular}{@{}c@{}} Point-Lipschitz and H\"older,  \\ cf. Proposition~\ref{prop:regularity}, and \\ {locally Lipschitz for scalar} \\ QPs, cf. Proposition~\ref{prop:scalar-qp-lipschitz} \end{tabular} & \cmark & \begin{tabular}{@{}c@{}} Only in special cases \\ cf. Proposition~\ref{prop:point-lipschitzness-uniqueness} \\  Corollary~\ref{cor:pl-1d-uniqueness}, Example~\ref{ex:point-lip-nonunique} \end{tabular} \\
    \hline
    \begin{tabular}{@{}c@{}} $f, g\in\Cc^{2}(\real^n\times\real^m)$ \\ $f(x,\cdot)$ strictly convex $\forall x\in\real^n$ \\ $g(x,\cdot)$ convex $\forall x\in\real^n$ \\ Existence of minimizer, SC \end{tabular}
    & \begin{tabular}{@{}c@{}} Directionally differentiable,  \\ cf. Proposition~\ref{prop:regularity}, \\ {locally Lipschitz for scalar} \\ QPs, cf. Proposition~\ref{prop:scalar-qp-lipschitz}, and \\ continuous, cf.~\citep[Thm 5.3]{AVF-JK:85}, \\  but might not be \\ point-Lipschitz, \\ cf. Example~\ref{ex:no-PL-without-diff-wrt-parameter} \end{tabular} & \cmark & \begin{tabular}{@{}c@{}} \xmark \\ cf. Example~\ref{ex:point-lip-nonunique}  \end{tabular} \\
    \hline
    \begin{tabular}{@{}c@{}} $f, g\in\Cc^{0}(\real^n\times\real^m)$ \\ $f(x,\cdot)$ strictly convex $\forall x\in\real^n$ \\ $g(x,\cdot)$ convex $\forall x\in\real^n$ \\ Existence of minimizer \\ LCF $\forall x\in\real^n$ \end{tabular}
    & \begin{tabular}{@{}c@{}} Locally bounded \\ {cf. Proposition 3.7, and} \\ measurable \\ {cf. Proposition 3.8} \end{tabular}
    & \begin{tabular}{@{}c@{}} \xmark \ (classical) \\ \cmark \ (Filippov) \end{tabular} 
    & \begin{tabular}{@{}c@{}} \xmark \ (classical) \\ \xmark \ (Filippov) \end{tabular} \\
    \hline
    % \begin{tabular}{@{}c@{}} $f, g\in\Cc^{2,2}(\real^n\times\real^m)$ \\ $f(x,\cdot)$ strongly convex $\forall x\in\real^n$ \\ $g(x,\cdot)$ convex $\forall x\in\real^n$ \\ Nonempty feasible set \\ Local compact feasibility \end{tabular}
    % & \begin{tabular}{@{}c@{}} Locally bounded \end{tabular}
    % & \xmark & \xmark \\
    % \hline
    \begin{tabular}{@{}c@{}} $f, g\in\Cc^{2}(\real^n\times\real^m)$ \\ $f(x,\cdot)$ strongly convex $\forall x\in\real^n$ \\ $g(x,\cdot)$ convex $\forall x\in\real^n$ \\ Existence of minimizer \end{tabular}
    & \begin{tabular}{@{}c@{}} Might be discontinuous \\ and even unbounded \\ cf. Examples~\ref{ex:discontinuous-no-SC},~\ref{ex:unbounded-no-SC} \end{tabular}
    & \begin{tabular}{@{}c@{}} \xmark \ (classical) \\ \xmark \ (Filippov) \end{tabular}
    & \begin{tabular}{@{}c@{}} \xmark \ (classical) \\ \xmark \ (Filippov) \end{tabular} \\
    \hline
  \end{tabular}
  \caption{Summary of results on regularity properties of
    optimization-based controllers. The first column describes the
    different assumptions.  The second column describes the regularity
    properties of $u^*$. The third (resp.  fourth) column describes
    whether existence (resp. uniqueness) of classical solutions of the
    closed-loop system~\eqref{eq:closed-loop} is guaranteed (provided
    that $F:\real^n\times\real^m\to\real^n$ is locally Lipschitz). In
    the last two columns, properties are stated by default for
    classical solutions. If results are available for both classical
    and Filippov solutions, the property for each type of solution is
    denoted separately.  LICQ stands for \textit{linear independence
      constraint qualification}, SCS stands for \textit{strict
      complementary slackness}, MFCQ stands for
    \textit{Mangasarian-Fromovitz Constraint Qualification} and CR
    stands for \textit{constant rank condition}. The terminology for
    regularity and constraint qualification is given in
    Section~\ref{sec:preliminaries}.}
  \label{tab:1}
\end{table*}

\normalsize

\subsection{Paper Goals and Contributions}

Our main goal in writing this paper is to provide an integrative
presentation of insights and results about the regularity of
optimization-based controllers.  We present in Table~\ref{tab:1} a
comprehensive collection of results that offers the reader interested
in using optimization-based controllers a one-stop destination to
assess the regularity properties of their control design.  The paper
presents several results from the literature, but restated here for
completeness from the perspective of optimization-based control. The
paper also contains many novel results that help fill gaps in the
state of the art.

In what follows, we assume that the control system operates in
continuous time and is given by
\begin{equation}\label{eq:dynamics}
  \dot{x} = F(x, u),
\end{equation}
where $F:\real^n\times\real^m\to\real^n$ is locally Lipschitz. Hence,
the closed-loop system takes the form
\begin{align}\label{eq:closed-loop}
  \dot{x}=F(x,u^*(x)).
\end{align}

On the technical level, the contributions of the paper are as
follows. First, we show that under appropriate constraint
qualifications and regularity properties of the optimization
problem~\eqref{eq:optim-based-controller}, the resulting optimization-based
controller is continuous, locally Lipschitz, continuously differentiable,
and even analytic. We provide specific conditions for each of these
cases and observe that any of those conditions guarantee existence and
uniqueness of solutions for the closed-loop system. Second, given the
importance of Robinson's counterexample in showing that
optimization-based controllers defined by well-behaved data might not
be locally Lipschitz and its implications, e.g., for safety-critical
control, cf. Example~\ref{ex:safety-critical} (where most
optimization-based controllers are defined by problem data sharing the
properties of Robinson's counterexample), we characterize the
regularity properties enjoyed by the parametric optimizer of problems
defined by objective and constraints with the same properties as in
Robinson's counterexample.
We show that even though such parametric optimizers are not locally
Lipschitz in general, they enjoy other weaker regularity properties,
which are enough to guarantee existence of solutions for the
closed-loop system and in some special cases, even uniqueness.  Third,
we provide different examples that show how if the properties in
Robinson's counterexample do not hold, optimization-based controllers
can be discontinuous and in some cases, even unbounded.  Fourth, we
show that even if the optimization-based controller is discontinuous,
under appropriate regularity properties of the optimization
problem~\eqref{eq:optim-based-controller}, the parametric optimizer is
measurable and locally bounded, and the closed-loop system has
Filippov solutions.  Finally, given a safe set of interest, we study
under what regularity conditions on~\eqref{eq:optim-based-controller}
and the set, solutions of the closed-loop system remain in the safe
set, both for classical and Filippov solutions.

\section{Preliminaries}\label{sec:preliminaries}
In this section we discuss different preliminary results on regularity
of functions and constraint qualifications.

\subsection{Notions of regularity of functions}\label{sec:regularity-subsec}

Throughout the note, we make use of the following notions of
regularity of functions.

\begin{definition}\longthmtitle{Notions of Lipschitz continuity}
  A function $f:\real^n\to\real^q$ is
  \begin{itemize}
  \item \emph{point-Lipschitz at $x_0 \in \real^n$} if there exists a
    neighborhood $\Uc$ of $x_0$ and a constant $L\geq0$
    such that
  \begin{align}
    \norm{f(x)-f(x_0)} \leq L\norm{x-x_0}, \quad \forall x \in \Uc.
  \end{align}
\item \emph{locally Lipschitz at $x_0 \in \real^n$} if there exists a
  neighborhood $\tilde{\Uc}$ of $x_0$ and a constant
  $\tilde{L}\geq0$ such that
  \begin{align}
    \norm{f(x)-f(y)} \leq \tilde{L}\norm{x-y}, \quad \forall x,y \in
    \tilde{\Uc}. 
  \end{align}
\end{itemize}
\end{definition}

The notion of point-Lipschitz continuity is used, for instance,
in~\citep[Section 6.3]{GS:18} and called \textit{Lipschitz stability},
without clearly acknowledging the difference with the notion of local
Lipschitz continuity. In the optimization literature, this
  property is sometimes referred to as \emph{calmness}
  (cf. \citep[Chapter 8.F]{RTR-RJBW:98}).
%
% \marginJC{Pol, could you double check this? It's a bit unfortunate we
%   did not make this connection earlier.}
%
Studying point-Lipschitz
continuity is natural in the context of parametric optimization, as
one is normally interested in understanding the changes in the
solution with respect to a \emph{fixed} value of the parameter.
Locally Lipschitz functions are point-Lipschitz, but the converse is
not true.  For instance, the function $f:\real\to\real$ defined by
$f(x)=x\sin(\frac{1}{x})$ if $x\neq0$ and $f(0)=0$ is point-Lipschitz
but not locally Lipschitz at the origin.
%Moreover,
%point-Lipschitz functions are continuous.

\begin{definition}\longthmtitle{H\"older property}
  A function $f:\real^n\to\real^q$ has the \emph{H\"older property at
    $x_0 \in \real^n$} if there exists a neighborhood $\hat{\Uc}$ of
  $x_0$ and constants $C>0$, $\alpha\in (0,1]$ such that
  \begin{align}
    \norm{f(x)-f(y)} \leq C \norm{x-y}^{\alpha}, \quad \forall x, y
    \in \hat{\Uc}. 
  \end{align}
\end{definition}
\smallskip

Note that if $f$ is locally Lipschitz at $x_0$ then it also has the
H\"older property at $x_0$ but the converse is not true.

\begin{definition}\longthmtitle{Directionally differentiable function}
  A function $f:\real^n\to\real$ is directionally differentiable if
  for any vector $v\in\real^n$, the limit
  \begin{align*}
    \lim_{h\to0}\frac{f(x+hv)-f(x)}{h}
  \end{align*}
  exists.  A vector-valued function
  % $\tilde{f}:\real^n\to\real^q$
  is directionally differentiable if each of its components is
  directionally differentiable.
\end{definition}

Let $\Omega \subset \real^n$.  Throughout the paper, a function
$\varphi:\Omega \to \real^d$ belongs to the set $\Cc^{k}(\Omega)$ if
$\varphi$ is $k$-times continuously differentiable in $\Omega$. A function
$\varphi:\Omega \to \real^d$ belongs to the set $\Cc^{0}(\Omega)$ if
$\varphi$ is continuous in $\Omega$. In case we view
the elements in $\Omega$ as vectors of the Cartesian product
$\real^{m_1} \times \real^{m_2}$ and $\varphi$ takes the form
$(x, u) \mapsto \varphi(x, u)$, the function
$\varphi \in \Cc^{0, k}(\Omega)$ if for every $x\in\Omega$, 
the derivatives of order up to $k$ of $\varphi(x,\cdot)$ with respect to 
$u$ exist and are continuous with respect to $x$ and $u$.

\begin{definition}\longthmtitle{Analytic function}\label{def:analytic}
  A function $f:\real^n\to\real$ is analytic in an open set $D$ if for
  any $x\in D$ there exists a sequence
  $\{ a_n \}_{n\in\mathbb{Z}_{\geq 0}}$ such that
  $f(x)=\sum_{n=0}^{\infty} a_n (x-x_0)^n$ for all $x$ in a
  neighborhood of $x_0$.  A vector-valued function is analytic in an
  open set $D$ if each of its components is analytic.
\end{definition}

Note that an analytic function in an open set~$D$ belongs to
$\Cc^k(D)$ for any $k\in\mathbb{Z}_{\geq0}$.  Finally, we introduce
the last notion of regularity, which is weaker than all the ones
presented above and only requires the function to be bounded in a
neighborhood of a point.

\begin{definition}\longthmtitle{Locally bounded function}\label{def:loc-bounded}
  A function $f:\real^n\to\real^q$ is locally bounded at
  $x_0\in\real^n$ if there exists a neighborhood $\breve{\Uc}$ of
  $x_0$ and a constant $B>0$ such that $\norm{f(x)}\leq B$ for all
  $x\in\breve{\Uc}$.
\end{definition}

% \begin{definition}\longthmtitle{Measurable function}\label{def:measurable}
%   Let $(X_1,\Sigma_1)$ and $(X_2,\Sigma_2)$ be measurable spaces,
%   meaning that $X_1$ and $X_2$ are sets equipped with respective
%   $\sigma$-algebras $\Sigma_1$ and $\Sigma_2$.  A function
%   $f:X_1 \to X_2$ is measurable if for every $E\in\Sigma_2$, it holds
%   that $f^{-1}(E) := \setdef{x\in X_1}{ f(x)\in E } \in \Sigma_1$.
% \end{definition}
% %
% \marginJC{Isn't this an overkill? Will we really be reasoning about
%   abstract measurable spaces? Aren't we just using the Lebesgue measure?}
% %

\subsection{Constraint Qualifications and
  Conditions}\label{sec:constraint-qualifications}

Here we recall different constraint qualifications and conditions for
problem~\eqref{eq:optim-based-controller}
following~\citep{SB-LV:04,GS:18}.  Throughout this section we fix
$x\in\real^n$. Furthermore, given $u\in\real^m$, we let $\Ic(x,u)$ be
the set of active constraints at $(x,u)$, i.e.,
$\Ic(x,u):=\setdef{i\in\{1,\hdots p\} }{g_i(x,u) = 0}$. We consider
the following:
\begin{description}
\item[MFCQ:] Mangasarian-Fromovitz Constraint Qualification (MFCQ)
  holds at $(x,u)\in\real^n\times\real^m$ if there exists
  $z\in\real^m$ such that $\nabla_u g_i(x,u) z < 0$ for all
  $i\in\Ic(x,u)$;
\item[LICQ:] Linear Independence Constraint Qualification (LICQ) holds
  at $(x,u)\in\real^n\times\real^m$ if the vectors
  $\{ \nabla_u g_i(x,u), i\in\Ic(x,u) \}$ are linearly independent;
\item[CR:] Constant Rank condition (CR) holds at
  $(x,u)\in\real^n\times\real^m$ if for any subset
  $L \subset \Ic(x,u)$ of active constraints, there exists a
  neighborhood $\Nc$ of $(x,u)$ such that the family
  $\{ \nabla_u g_i(x,u), i\in L \}$ remains of constant rank in $\Nc$;
\item[SC:] Slater's Condition (SC) holds at $x\in\real^n$ if there
  exists $\hat{u}\in\real^m$ such that $g_i(x,\hat{u}) < 0$ for all
  $i\in\{ 1,\hdots, p \}$;
\item[SCS:] given $x\in\real^n$, let $(u^*(x),\lambda^*(x))$ be a KKT
  point for the optimization problem
  in~\eqref{eq:optim-based-controller}.  Then, $(u^*(x),\lambda^*(x))$
  satisfies Strict Complementary Slackness (SCS) if there does not
  exist $i\in\{ 1,\hdots,p \}$ such that $\lambda_i^*(x)=0$ and
  $g_i(x,u^*(x))=0$;
\item[LCF:] local compact feasibility (LCF) holds at $x\in\real^n$ if
  there exists a compact set $K\subset\real^m$ and $\delta>0$ such
  that for all $y\in\real^n$ such that $\norm{y-x}<\delta$, there
  exists $u\in K$ such that $g(y,u)\leq 0$.
\end{description}

\section{Regularity of Parametric Optimizers}\label{sec:regularity-properties}

In this section we discuss how the assumptions on the functions $f$
and $g$ defining~\eqref{eq:optim-based-controller} affect the
regularity properties of the resulting controller~$u^*$.  Throughout
this section, we assume that $f$ and $g$ belong to
$\Cc^{2}(\real^n\times\real^m)$, $f(x,\cdot)$ is
  strictly convex for all $x\in\real^n$ (for some
  specific results, we further assume that $f(x,\cdot)$ is strongly
  convex for all $x\in\real^n$, but we make it explicit if this is the
  case) and $g(x,\cdot)$ is convex for all $x\in\real^n$.
We further assume that, for each
  $x\in\real^n$,~\eqref{eq:optim-based-controller} has at least one
  minimizer (note that if $f(x,\cdot)$ is strongly convex for all
  $x\in\real^n$, this holds if~\eqref{eq:optim-based-controller} is
  feasible for all $x\in\real^n$).  By the convexity assumptions
  described above, this implies that $u^*(x)$ is a singleton for every
  $x\in\real^n$. Furthermore, the assumptions on
  convexity also ensure that for each $x\in\real^n$,
  \textit{interior-point algorithms}~\citep{YN-AN:94} can be used to
  solve~\eqref{eq:optim-based-controller}, which run in polynomial
  time when the optimization problem is a linear program, a quadratic
  program, a second-order convex program, or a semidefinite program.
% 
% \marginJC{Wording is confusing here: ``a large class of problems of
%   the form~\eqref{eq:optim-based-controller}...'' makes it sound like
%   there are also a smaller class of problems of the
%   form~\eqref{eq:optim-based-controller} that are not solvable in
%   polynomial time. }
%
This type of assumptions are very common, for instance, in CBF-based
QPs, (cf. Example~\ref{ex:safety-critical}), or in model predictive
controllers for linear systems (cf. Example~\ref{ex:mpc}),
for which there also exist specific algorithms that
solve them efficiently~\citep{BS-GB-PG-AB-SB:20}.

First, we gather a few existing results from the literature:
\begin{description}
\item[ Continuity: ] Under the assumption
    that MFCQ holds at $(x,u^*(x))$, the parametric solution $u^*$ is
    continuous~\citep[Theorem 5.3]{AVF-JK:85}.
\item[Local Lipschitzness:] Under the assumption that both MFCQ and CR
  hold at $(x,u^*(x))$, the parametric solution $u^*$ is
%locally Lipschitz~\citep{RJ:84} 
locally Lipschitz~\citep[Theorem 3.6]{JL:95}.  The same conclusion can
be obtained if LICQ holds~\citep[Theorem 4.1]{SMR:80} at $(x,u^*(x))$.
We note also that since the satisfaction of LICQ implies the 
satisfaction of MFCQ and CR (cf.~\citep[Proposition 3.1]{JL:95}),~\citep[Theorem 3.6]{JL:95}  
is stronger than~\citep[Theorem 4.1]{SMR:80}.
\item[Continuous Differentiability:] Under the assumptions of LICQ and
  SCS, the parametric solution $u^*$ is continuously
  differentiable~\citep[Theorem 2.1]{AVF:76}. This last point was
  already noted in the optimization-based control literature
  in~\citep[Theorem 1]{BJM-MJP-ADA:15}. In fact, if $f$ and $g$
  belong to $\Cc^{p}(\real^n,\real^m)$, with
  $p\in\mathbb{Z}_{>0}, p\geq2$, the proof of~\citep[Theorem
  2.1]{AVF:76} can be adapted using the Implicit Function Theorem for
  higher degree of differentiability~\citep[Proposition
  1B.5]{ALD-RTR:14}, to show that the parametric optimizer belongs to
  $\Cc^{p-1}(\real^n)$.
\item[Analyticity:] Similarly, if $f$ and $g$ are analytic in
  $\real^n$, then the proof of~\citep[Theorem 2.1]{AVF:76} can be
  adapted using the Analytic Function Theorem~\citep[Theorem
  3.3.2]{MSB:77},
  % for higher degree of
  % differentiability~\citep[Proposition 1B.5]{ALD-RTR:14}, 
  to show that the parametric optimizer is analytic in $\real^n$.
\end{description}

If the constraint qualifications given above for the case of 
local Lipschitzness do not hold, the
parametric optimizer can fail to be locally Lipschitz. To illustrate
this, we revisit next an example due to Robinson~\citep{SMR:82}.

\begin{example}\longthmtitle{Robinson's
    Counterexample}\label{ex:robinson-counterex}
  \rm{In~\citep{SMR:82}, Robinson introduces the following parametric
  optimization problem: for $x=(x_1, x_2)\in\real^2$, consider
  \begin{subequations}\label{eq:robinsons-counterexample}
    \begin{align}
      & \min_{u\in\real^4} \frac{1}{2}u^\top  u
      \\
      & 
        \quad \text{s.t. } A(x)u \geq b(x)
    \end{align}  
  \end{subequations}
  where
  \begin{align*}
    A(x) = \begin{bmatrix}
            0 & -1 & 1 & 0, \\
            0 & 1 & 1 & 0, \\
            -1 & 0 & 1 & 0, \\
            1 & 0 & 1 & x_1
          \end{bmatrix} ,
      \quad b(x) = \begin{bmatrix}
          1 \\
          1 \\
          1 \\
          1+x_2
      \end{bmatrix} .
  \end{align*}
  Problem~\eqref{eq:robinsons-counterexample} is a quadratic program
  with strongly convex objective function, smooth objective function
  and constraints, and for which Slater's condition holds for every
  value of the parameter (this can be shown by noting that
  $\hat{u}=(0,0,2+|x_2|,0)$ satisfies all constraints strictly).
  Despite these nice properties, the parametric solution
  of~\eqref{eq:robinsons-counterexample} is not locally Lipschitz at
  $(x_1,x_2)=(0,0)$.  Indeed, let $u^*:\real^2\to\real^4$ denote the
  parametric solution of~\eqref{eq:robinsons-counterexample}, and
  $u_4^*:\real^2\to\real$ its fourth component, which is given by
  \begin{align*}
    u_4^*(x_1,x_2)=
    \begin{cases}
      0  &\text{if} \ x_2\leq 0,
      \\
      \frac{x_2}{x_1} \
        &\text{if} \ x_2\geq0, \ x_1\neq0,
          \frac{x_1^2}{2}\geq x_2,
      \\ 
      \frac{x_1(x_2+1)}{x_1^2+2} \ &\text{otherwise}.
    \end{cases}
  \end{align*}
  $u_4^*$ in Figure~\ref{fig:sim}.
  %The other components of $u^*$ are continuously differentiable and
  %therefore locally Lipschitz. 
  For $s \geq 0$, let $p(s) = (s, s^2/2)$ and $q(s) = (s, 0)$. Then, 
  observe that $p(s)$ and $q(s)$ approach the origin as $s \to 0^+$, 
  however,
  \begin{align*}
    \frac{\norm{u_4^*(p(s))-u_4^*(q(s))}}{\norm{p(s) - q(s)}}
    = \frac{1}{s}.
  \end{align*}
  Since the right hand side of the previous expression can be 
  made arbitrarily large by choose $s$ sufficiently small, it 
  follows that 
  %However, if
  %$p_{x_1}=(x_1,\frac{1}{2}x_1^2)$ and $q_{x_1}=(x_1,0)$, we have
  %\begin{align*}
  %  \frac{\norm{u_4^*(p_{x_1})-u_4^*(q_{x_1})}}{\norm{p_{x_1}-q_{x_1}}}
  %  = \frac{1}{x_1} .
  %\end{align*}
  %Since $x_1$ can be taken to be arbitrarily small, 
  %This shows that
  $u^*$ is not locally Lipschitz at the origin.  We also note
  that~\citep[Example 3.11]{JL:95} gives a similar example for a
  parametric quadratic program with a two-dimensional optimization
  variable, three-dimensional parameter, strongly convex objective
  function, smooth objective function and constraints, and for which
  Slater's condition holds for every value of the parameter and 
  the corresponding parametric optimizer also fails to be locally Lipschitz.
} \demo
\end{example}

Even though Example~\ref{ex:robinson-counterex} shows that the
parametric optimizer of~\eqref{eq:robinsons-counterexample} is not
locally Lipschitz, it actually satisfies a set of weaker regularity
properties.  The following result characterizes them, in the general
setting of optimization problems satisfying the same conditions
as~\eqref{eq:robinsons-counterexample}.

\begin{proposition}\longthmtitle{Regularity Properties of Parametric
    Optimizer}\label{prop:regularity}
  % Let $x\in\real^n$ and suppose that there exists a neighborhood
  % $\Vc:=\Vc_x\times\Vc_u\subset\real^n\times\real^m$ of $(x,u^*(x))$
  % such that $f$ and $g$ are twice continuously differentiable in
  % $\Vc$, $f$ is strongly convex in $u$ for all $x\in\Vc_x$ and $g$
  % is convex in $u$ for all $x\in\Vc_x$.
  Suppose that $f$ and $g$ belong to
  $\Cc^{2}( \real^n\times\real^m )$. Further assume that for any
  $x\in\real^n$, $g(x,\cdot)$ is
  convex. Suppose that SC holds at $x_0\in\real^n$.
  % {\color{red} Suppose also that given $x_0\in\real^n$, there exists
  %   $\hat{u}\in\real^m$ such that $g(x_0,\hat{u})<0$.}
  Then,
  \begin{enumerate}
  \item\label{it:first} if $f(x,\cdot)$ is strongly convex for all $x\in\real^n$,
    there exists a neighborhood $\Vc_{x_0}$ of
    $x_0$ such that $u^*$ is point-Lipschitz at $y$ for all
    $y\in\tilde{\Vc}_{x_0}$;
  \item\label{it:second} if $f(x,\cdot)$ is strongly convex for all $x\in\real^n$, 
   $u^*$ has the H\"older property at $x_0$;
  \item\label{it:third} if $f(x,\cdot)$ is strictly convex for all $x\in\real^n$
  and~\eqref{eq:optim-based-controller} has at least one minimizer for all $x\in\real^n$,
   $u^*$ is directionally differentiable at $x_0$.
  \end{enumerate}
\end{proposition}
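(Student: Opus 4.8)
The three parts share a common mechanism: Slater's condition controls how the feasible-set map $x \mapsto \Cc(x) := \{u \in \real^m : g(x,u) \le 0\}$ deforms with the parameter, and (strong/strict) convexity of $f(x,\cdot)$ converts this into control on $u^*$. The plan is therefore to first establish a \emph{feasibility-perturbation estimate}. Since SC holds at $x_0$ and $g$ is continuous, there is a Slater point $\hat u$ and a margin $\rho > 0$ with $g_i(x,\hat u) \le -\rho$ for all $i$ and all $x$ in a neighborhood of $x_0$; moreover $g$ (being $\Cc^2$) is Lipschitz, with some constant $L_g$, on a bounded set containing all the points we touch. Then, given any $\bar u$ with $g(y,\bar u) \le 0$, the convex combination $\tilde u = (1-\tau)\bar u + \tau \hat u$ with $\tau$ proportional to $L_g\norm{x-y}/\rho$ satisfies $g(x,\tilde u) \le 0$: indeed $g_i(x,\bar u) \le g_i(y,\bar u) + L_g\norm{x-y} \le L_g\norm{x-y}$, so convexity of $g_i(x,\cdot)$ gives $g_i(x,\tilde u) \le (1-\tau)L_g\norm{x-y} - \tau\rho \le 0$ for $\tau$ as above, while $\norm{\tilde u - \bar u} \le C\norm{x-y}$ with $C$ depending only on $\rho$, $L_g$ and a bound on $\norm{\hat u - \bar u}$. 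Applying this both from $y$ to $x$ and from $x$ to $y$ and combining with the local Lipschitzness of $f$ and optimality, one also deduces that the optimal-value function $v(x) := f(x,u^*(x))$ is locally Lipschitz near $x_0$.

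For part~(ii), fix $x,y$ near $x_0$ and let $\tilde u$ be the feasibility correction of $u^*(y)$ at $x$, so $\tilde u \in \Cc(x)$ and $\norm{\tilde u - u^*(y)} \le C\norm{x-y}$. Strong convexity of $f(x,\cdot)$ with modulus $\mu$ yields the quadratic-growth bound $f(x,u) \ge v(x) + \tfrac{\mu}{2}\norm{u - u^*(x)}^2$ for all $u \in \Cc(x)$, hence $\norm{u^*(x) - \tilde u}^2 \le \tfrac{2}{\mu}\bigl(f(x,\tilde u) - v(x)\bigr)$. Writing $f(x,\tilde u) - v(x) = [f(x,\tilde u) - f(x,u^*(y))] + [f(x,u^*(y)) - v(y)] + [v(y) - v(x)]$ and bounding the first bracket by $O(\norm{\tilde u - u^*(y)}) = O(\norm{x-y})$ (Lipschitzness of $f$ in $u$), the second by $O(\norm{x-y})$ (Lipschitzness of $f$ in $x$, since $f(y,u^*(y)) = v(y)$), and the third by $O(\norm{x-y})$ (Lipschitzness of $v$), we get $\norm{u^*(x) - \tilde u} \le C'\norm{x-y}^{1/2}$ and therefore $\norm{u^*(x) - u^*(y)} \le C'\norm{x-y}^{1/2} + C\norm{x-y} \le C''\norm{x-y}^{1/2}$ on a neighborhood of $x_0$, which is the H\"older property (exponent $1/2$) at $x_0$.

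Part~(i) is more delicate: point-Lipschitzness at a fixed base point $y$ is \emph{not} a consequence of the $1/2$-H\"older estimate, so the linear rate must be recovered at $y$ itself. The plan is a contradiction argument: if no linear bound holds at $y$, pick $x_k \to y$ with $\norm{u^*(x_k) - u^*(y)}/\norm{x_k - y} \to \infty$ and pass to a subsequence so that the active sets $\Ic(x_k, u^*(x_k))$ stabilize to some $J \subseteq \Ic(y, u^*(y))$, the KKT multipliers at $x_k$ (bounded by MFCQ, which follows from SC) converge to a KKT multiplier $\lambda_0$ at $y$, and $(x_k - y)/\norm{x_k-y}$ converges. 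The key point is that constraints inactive along the subsequence carry a zero entry in $\lambda_0$, so the ``degenerate'' constraints that lack strict complementary slackness are neutralized. Pairing the KKT conditions at $x_k$ and at $y$ against $u^*(y) - u^*(x_k)$, and using strong monotonicity of $\nabla_u f(x,\cdot)$ together with convexity of the $g_i$, produces an inequality that, after careful bookkeeping, rules out superlinear growth of $\norm{u^*(x_k) - u^*(y)}$. I expect that bookkeeping --- keeping every error term that is genuinely first order in $\norm{x_k - y}$ on the correct side of the inequality, and handling the case where the limiting active gradients are dependent --- to be the main obstacle.

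For part~(iii), the objective is only strictly (not strongly) convex, so there is no quadratic growth and the argument must be structural. By~\citep[Theorem 5.3]{AVF-JK:85}, $u^*$ is continuous at $x_0$. Fix a direction $v$, put $x_t = x_0 + tv$ and $w_t = (u^*(x_t) - u^*(x_0))/t$ for $t > 0$, and let $\bar w$ be any cluster point of $w_t$ as $t \to 0^+$. Expanding the cost and the active constraints to first order along the corresponding sequence shows that $\bar w$ lies in the linearized feasible cone $\{w : \nabla_u g_i(x_0,u^*(x_0))^\top w + \nabla_x g_i(x_0,u^*(x_0))^\top v \le 0,\ i \in \Ic(x_0,u^*(x_0))\}$ and, using the feasibility-perturbation estimate to build competitors at $x_t$ out of arbitrary elements of this cone, that $\bar w$ solves a suitable auxiliary problem (a linearization of~\eqref{eq:optim-based-controller} about $(x_0,u^*(x_0))$ in the direction $v$); strict convexity of $f(x,\cdot)$ forces this auxiliary problem to have a unique solution, so all cluster points coincide and $w_t$ converges. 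The main obstacle here is establishing that the difference quotients $w_t$ stay bounded as $t \to 0^+$ and that the auxiliary problem is genuinely single-valued under strict --- rather than strong --- convexity; this is where the hypothesis that a minimizer exists for \emph{every} parameter, not just near $x_0$, should be used.
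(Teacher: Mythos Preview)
Your approach is genuinely different from the paper's. The paper does not argue directly: it verifies that SC (together with convexity of $g(x_0,\cdot)$) yields MFCQ at $(x_0,u^*(x_0))$, that strong convexity gives the second-order condition SOC2, and then simply invokes \cite[Theorem~6.4]{GS:18} for~\ref{it:first}, \cite[Theorem~2.1]{NDY:95} (a H\"older result for variational inequalities, using the pseudo-Lipschitz property of the feasible map from \cite[Remark~3.6]{TR:84}) for~\ref{it:second}, and \cite[Theorem~1]{DR-SD:95} for~\ref{it:third}. Your route, by contrast, builds everything from a Slater-based feasibility correction and quadratic growth. For~\ref{it:second} your argument is correct and self-contained; it is essentially the primal proof underlying Yen's result specialized to the optimization (rather than VI) setting, and it delivers the explicit exponent $\alpha=\tfrac12$.

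For~\ref{it:first}, however, there is a real gap, and it is more than ``bookkeeping''. If you carry out the KKT pairing you describe --- dot the stationarity condition at $x_k$ with $u^*(y)-u^*(x_k)$, do the same at $y$, add, and use strong monotonicity of $\nabla_u f$ and convexity of each $g_i$ --- you obtain an inequality of the form $\mu\norm{d_k}^2 \le C\,t_k\norm{d_k} + C'\,t_k$, where $d_k=u^*(x_k)-u^*(y)$ and $t_k=\norm{x_k-y}$; the constant term $C't_k$ comes from bounding $g_i(x_k,u^*(y))$ and $g_i(y,u^*(x_k))$ by $O(t_k)$ after complementary slackness. Solving this quadratic gives $\norm{d_k}\le C''\sqrt{t_k}$, i.e., exactly your H\"older estimate from~\ref{it:second}, \emph{not} the linear rate. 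The passage to a subsequence with converging multipliers does not help, because the rate $\lambda^k\to\lambda_0$ is uncontrolled under MFCQ alone. The upgrade to a linear bound at the fixed base point~$y$ requires an additional second-order ingredient (a Taylor expansion of the Lagrangian in $(x,u)$ that turns the $O(t_k)$ residual into $O(t_k\norm{d_k})+O(t_k^2)$), which is precisely the content of the SOC2-based argument behind \cite[Theorem~6.4]{GS:18}; your sketch does not contain this step.

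For~\ref{it:third}, the obstacle you flag is also genuine: with only strict (not strong) convexity there is no quadratic growth, so neither your feasibility-correction machinery nor the pairing argument yields boundedness of $w_t=(u^*(x_0+tv)-u^*(x_0))/t$, and your auxiliary linearized problem need not have a unique minimizer under strict convexity alone (strict convexity of $f(x_0,\cdot)$ does not transfer to the quadratic/linear auxiliary problem you would obtain in the limit). The paper bypasses both issues by appealing directly to the Ralph--Dempe directional-derivative theorem, whose hypotheses it checks via MFCQ.
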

\begin{proof}
  First we note that since in~\ref{it:first} and~\ref{it:second}, 
  $f(x_0,\cdot)$ is strongly convex, and in~\ref{it:third} $f(x_0,\cdot)$ is 
  strictly convex and~\eqref{eq:optim-based-controller} has at least one minimizer at 
  $x_0$, it follows that in all of~\ref{it:first},~\ref{it:second},~\ref{it:third},
  $u^*(x_0)$ is unique and well-defined for all $x_0\in\real^n$.
  
  To prove~\ref{it:first} we use~\citep[Theorem 6.4]{GS:18}.  Since SC
  holds at $x_0$, by~\citep[Prop. 5.39]{NA-AE-MP:20}, since
  $g(x_0,\cdot)$ is convex, MFCQ holds at $(x_0,u^*(x_0))$.
  Furthermore, since $f(x_0,\cdot)$ is strongly convex and
  $g(x_0,\cdot)$ is convex, the second-order condition
  SOC2~\citep[Definition 6.1]{GS:18} holds (note that SOC2 is not guaranteed 
  to hold if $f(x_0,\cdot)$ is only strictly convex). All of this, together with
  the twice continuous differentiability of $f$ and $g$ imply,
  by~\citep[Theorem 6.4]{GS:18}, that $u^*$ is point-Lipschitz at
  $x_0$. Now, since $g$ is continuous, there exists a neighborhood
  $\Vc_{x_0}$ of $x_0$ such that SC holds for all $y\in\Vc_{x_0}$.  By
  repeating the same argument, $u^*$ is point-Lipschitz at $y$ for all
  $y\in \Vc_{x_0}$.
  
  Now let us prove~\ref{it:second}. We use~\citep[Theorem
  2.1]{NDY:95}, which gives a sufficient condition for the solution of
  a variational inequality to have the H\"older property.  Recall that
  given a map $F:\real^m\to\real^m$, and a constraint set
  $\bar{\Cc}\subset\real^m$, a variational inequality refers to the
  problem of finding $u^*\in\bar{\Cc}$ such that
  $(u-u^*)^T F(u^*) \geq 0$ for all $u\in\bar{\Cc}$. For every fixed
  $x\in\real^n$, by taking the map $F$ to be the gradient of $f$ with
  respect to $u$ at $x$, and by taking $\bar{\Cc}$ to be the
  constraint set of~\eqref{eq:optim-based-controller} at $x$, a
  constrained optimization problem of the
  form~\eqref{eq:optim-based-controller} can be posed as a variational
  inequality, cf.~\citep{DK-GS:80}.  Since $f$ is twice continuously
  differentiable and $f(x_0,\cdot)$ is strongly convex, conditions (2.1) and (2.2)
  in~\citep[Theorem 2.1]{NDY:95} hold.
  Note that condition (2.2) in~\citep[Theorem 2.1]{NDY:95} is not guaranteed 
  to hold if $f(x_0,\cdot)$ is only strictly convex.
  Moreover, since MFCQ holds at
  $(x_0,u^*(x_0))$ (because SC holds), by~\citep[Remark 3.6]{TR:84}
  the constraint set is pseudo-Lipschitzian~\citep[Definition
  1.1]{NDY:95}. All of this implies by~\citep[Theorem 2.1]{NDY:95}
  that $u^*$ has the H\"older property at~$x_0$.
    
  Finally,~\ref{it:third} follows from the fact that SC implies MFCQ
  and~\citep[Theorem 1]{DR-SD:95}. Note that in this case, the 
  assumptions of~\citep[Theorem 1]{DR-SD:95}
  are satisfied by only requiring that $f(x,\cdot)$ is strictly convex for all $x\in\real^n$,
  instead of strongly convex for all $x\in\real^n$.
\end{proof}

In Proposition~\ref{prop:regularity}, note that neither~\ref{it:first}
implies~\ref{it:second} nor the converse. Even though the parametric
optimizer in Robinson's counterexample is not locally Lipschitz,
Proposition~\ref{prop:regularity} shows that it enjoys other, slightly
weaker, regularity properties. In particular, this result implies that
$u^*_4$, the fourth component of the parametric optimizer of
Robinson's counterexample, is continuous, cf. Figure~\ref{fig:sim}.

\begin{figure}[htb]
  \centering
  %{\includegraphics[width=0.99\linewidth]{}}
  {\includegraphics[width=0.95\linewidth]{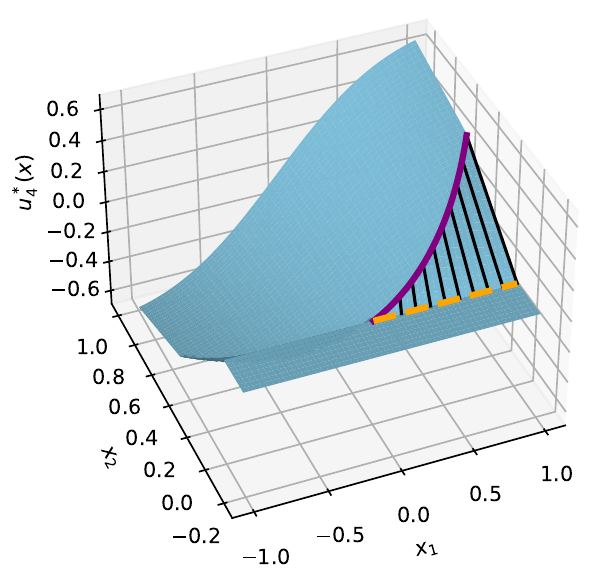}}
  \caption{Surface plot of $u_4$, which is the fourth component
    of the parametric optimizer of Robinson's counterexample,
    cf.~\eqref{eq:robinsons-counterexample}. The purple solid
      and orange dashed lines correspond to the plot of $u_4(p(s))$
      and $u_4(q(s))$ (defined in Example \ref{ex:robinson-counterex})
      respectively, for $s \geq 0$. The solid black lines connect
      points corresponding to $p(s)$ and $q(s)$.  The plot shows that
    $u_4$ is continuous at the origin, in agreement with
    Proposition~\ref{prop:regularity}. However, since
      the slope of the line connecting $u_4(p(s))$ and $u_4(q(s))$
      becomes arbitrarily large as $p$ and $q$ approach the origin,
      $u_4$ is not locally Lipschitz at the origin.}
  \label{fig:sim}
\end{figure}
%
% \marginJC{AE complained about not being able to see things when
%   printed in black and white, so relying on ``purple and orange'' only
%   invalidates any potential improvement we might have made. Use
%   ``purple solid'' and ``orange dashed'', or something like
%   that. Also, is there a way of that ``line connecting $u_4(p(s))$ and
%   $u_4(q(s))$''? Do they correspond to the blue lines depicted in the
%   plot?}
%

Proposition~\ref{prop:regularity} also clarifies a confusion that has
arisen in the literature due to the loose use of terminology. Indeed,
according to~\citep[Theorem 6.4]{GS:18}, a parametric optimization
problem whose data satisfies the properties of Robinson's
counterexample has a Lipschitz minimizer!  This apparent contradiction
is rooted in different notions of Lipschitzness. Indeed, the notion of
Lipschitzness used in~\citep[Theorem 6.4]{GS:18} corresponds to
point-Lipschitzness.

Next we show that in the special case of parametric quadratic programs
that satisfy the assumptions of Proposition~\ref{prop:regularity} with
a scalar optimization variable, the parametric optimizer is locally
Lipschitz.

\begin{proposition}\longthmtitle{Scalar parametric quadratic programs
    have locally Lipschitz optimizers}\label{prop:scalar-qp-lipschitz}
  Suppose that $f\in\Cc^{2}(\real^n\times \real^1 )$ and and for
  $i\in\{1,\hdots,p \}$, let $g_i^0:\real^n\to\real$,
  $g_i^1:\real^n\to\real$ belong to $\Cc^2(\real^n)$ and
  \begin{align*}
    g(x,u) =
    \begin{pmatrix}
      g_1^0(x)u + g_1^1(x)
      \\
      \vdots
      \\
      g_i^0(x)u + g_i^1(x)
      \\
      \vdots
      \\
      g_p^0(x)u + g_p^1(x)
    \end{pmatrix}.
  \end{align*}
  Further assume that for any $x\in\real^n$, $f(x,\cdot)$ is strictly
  convex and~\eqref{eq:optim-based-controller} has at least one minimizer. Suppose that SC holds at $x_0\in\real^n$.  Then, $u^*$ is
  locally Lipschitz at $x_0$.
\end{proposition}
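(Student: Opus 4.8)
The plan is to use that the decision variable $u$ is scalar, so that for each fixed $x$ the affine constraints $g_i^0(x)u+g_i^1(x)\le 0$ cut out an interval in $\real$ and $u^*(x)$ is simply the projection onto that interval of the unconstrained minimizer of $f(x,\cdot)$. As a preliminary step I would observe, exactly as in the proof of Proposition~\ref{prop:regularity}, that SC at $x_0$ implies MFCQ at $(x_0,u^*(x_0))$, and hence (and for all nearby $x$, since SC is an open condition) $u^*$ is continuous near $x_0$ by~\citep[Theorem 5.3]{AVF-JK:85}. Fix a compact interval $B$ containing $u^*(x_0)$ in its interior; then $u^*(x)\in\interior(B)$ for all $x$ in a neighborhood of $x_0$.

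Next I would sort the constraint indices by the sign of $g_i^0(x_0)$. If $g_i^0(x_0)=0$, SC forces $g_i^1(x_0)<0$; since $g_i^0(x)\to 0$ as $x\to x_0$ and $B$ is bounded, $g_i(x,u)<0$ for all $u\in B$ and all $x$ near $x_0$, so this constraint is inactive on $B$ and can be ignored. If $g_i^0(x_0)>0$ (resp.\ $<0$), then near $x_0$ the $i$th constraint reads $u\le r_i(x):=-g_i^1(x)/g_i^0(x)$ (resp.\ $u\ge \ell_i(x):=-g_i^1(x)/g_i^0(x)$), a ratio of $\Cc^2$ functions with nonvanishing denominator, hence locally Lipschitz near $x_0$. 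Put $r(x):=\min_i r_i(x)$ and $\ell(x):=\max_j \ell_j(x)$ (with values $+\infty$, $-\infty$ if the index set is empty); both are locally Lipschitz near $x_0$, and using a strictly feasible point $\hat u$ from SC one gets $\ell(x_0)<\hat u<r(x_0)$, hence $\ell(x)<r(x)$ for $x$ near $x_0$. A short convexity argument then shows that $u^*(x)=\operatorname{argmin}_{u\in[\ell(x),r(x)]}f(x,u)$ for $x$ near $x_0$: since $u^*(x)\in\interior(B)$ and the true feasible set coincides with $[\ell(x),r(x)]$ inside $B$, any $v\in[\ell(x),r(x)]$ with $f(x,v)<f(x,u^*(x))$ would, by convexity along the segment $[u^*(x),v]$, produce strictly better \emph{feasible} points arbitrarily close to $u^*(x)$, contradicting optimality.

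The final step writes the clipped minimizer in closed form. Letting $\bar u(x)$ denote the unconstrained minimizer of $f(x,\cdot)$, convexity gives $u^*(x)=\max\{\ell(x),\min\{r(x),\bar u(x)\}\}$ (the median of $\ell(x)$, $\bar u(x)$, $r(x)$) whenever $\ell(x)\le r(x)$; since $\max$ and $\min$ are $1$-Lipschitz in each argument, $u^*$ is locally Lipschitz at $x_0$ as soon as $\bar u$ is. For a quadratic program, $f(x,u)=\tfrac{1}{2}a(x)u^2+b(x)u+c(x)$ with $a(x)>0$, so $\bar u(x)=-b(x)/a(x)$ is $\Cc^2$ near $x_0$, which closes the argument. (The same conclusion holds whenever $f(x_0,\cdot)$ is strongly convex: then $\partial_{uu}f(x_0,\bar u(x_0))>0$, and the implicit function theorem applied to $\partial_u f(x,\bar u(x))=0$ gives $\bar u\in\Cc^1$ near $x_0$.)

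I expect this last step to be the crux: proving local Lipschitzness of the unconstrained minimizer $\bar u$ is exactly where the quadratic (equivalently, uniform strong-convexity) structure is needed, and it cannot be weakened to merely strictly convex $\Cc^2$ data — already $f(x,u)=\tfrac{1}{4}u^4-xu$ has $\bar u(x)=x^{1/3}$, which is not locally Lipschitz at $0$. By contrast, the reduction steps (discarding the sign-zero constraints, the convexity argument identifying $u^*$ with the interval-constrained minimizer, and the median representation) are routine once the notation is in place.
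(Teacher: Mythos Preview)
Your approach is correct and takes a genuinely different route from the paper. The paper's proof is short and abstract: SC forces $g_i^0(x_0)\neq0$ for every active index $i$ (else $g_i^1(x_0)=0$ and SC fails), which in the scalar case immediately gives CR at $(x_0,u^*(x_0))$ --- any subfamily of the nonzero scalars $\{g_i^0(x_0)\}$ has rank~$1$, and this persists nearby by continuity --- and then, together with MFCQ (from SC), the paper invokes~\citep[Theorem~3.6]{JL:95} directly. Your argument instead exploits the one-dimensional geometry concretely, reducing the feasible set near $x_0$ to an interval with locally Lipschitz endpoints and writing $u^*$ as the median of those endpoints and the unconstrained minimizer~$\bar u$. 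This is more elementary and fully self-contained, at the price of the case analysis and of having to establish Lipschitzness of~$\bar u$. Your closing observation is also on point: the example $f(x,u)=\tfrac14u^4-xu$ (with a slack affine constraint so that SC is meaningful) shows that local Lipschitzness of $u^*$ genuinely requires the quadratic/strong-convexity structure the proposition's title indicates, not merely strict convexity --- a subtlety your approach makes explicit.
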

\begin{proof}
  First, since~\eqref{eq:optim-based-controller} has at least one minimizer
  for all $x\in\real^n$, the convexity assumptions of $f$ and $g$ imply that 
  $u^*$ is a singleton for all $x\in\real^n$.
  Note that for all $i\in\Ic(x_0,u^*(x_0))$, $g_i^0(x_0)\neq 0$.
  Indeed, if $g_i^0(x_0) = 0$ and $i\in\Ic(x_0,u^*(x_0))$, it follows
  that $g_i^1(x_0) = 0$, which implies that Slater's condition at
  $x_0$ is violated. Hence, $g_i^0(x_0)\neq 0$ for all
  $i\in\Ic(x_0,u^*(x_0))$ and the CR holds at
  $(x_0,u^*(x_0))$. Moreover, since Slater's condition holds at $x_0$,
  by~\citep[Prop. 5.39]{NA-AE-MP:20}, since $g(x_0,\cdot)$ is convex,
  MFCQ holds at $(x_0,u^*(x_0))$.  By~\citep[Theorem 3.6]{JL:95}, this
  implies that $u^*$ is locally Lipschitz at $x_0$.
\end{proof}

Note that Robinson's counterexample or~\citep[Example 3.11]{JL:95} do
not contradict Proposition~\ref{prop:scalar-qp-lipschitz}, since in
those two examples the optimization variable of the quadratic program
has dimensions four and two, respectively.
% Proposition~\ref{prop:scalar-qp-lipschitz} shows that if the
% optimization variable is one-dimensional, the lack of local
% Lipschitzness present in those two examples is precluded.
Proposition~\ref{prop:scalar-qp-lipschitz} shows that
optimization-based controllers for single-input systems with affine
constraints (e.g., obtained from CBF or CLF based conditions for
control-affine systems) that satisfy Slater's conditions are locally
Lipschitz.

The following examples show that the results from
Proposition~\ref{prop:regularity} do not hold if the assumptions are
weakened, even slightly.

\begin{example}\longthmtitle{Not point-Lipschitz optimizer without
    differentiability of problem data with respect to the
    parameter}\label{ex:no-PL-without-diff-wrt-parameter} {\rm If $f$
    and $g$ are not differentiable with respect to the parameter $x$
    but the rest of the assumptions of
    Proposition~\ref{prop:regularity} hold (even with $f(x,\cdot)$ strongly convex 
    for all $x\in\real^n$), 
    the following example,
    inspired by Robinson's counterexample, shows that the parametric
    optimizer is not necessarily point-Lipschitz.  Let
    $x=(x_1, x_2)\in\real^2$ and
    consider~\eqref{eq:robinsons-counterexample} with
    \begin{align*}
      A(x) =
      \begin{bmatrix}
        0 & -1 & 1 & 0, \\
        0 & 1 & 1 & 0, \\
        -1 & 0 & 1 & 0, \\
        1 & 0 & 1 & \sqrt{|x_1|} \end{bmatrix} , \quad b(x)
      =
      \begin{bmatrix}
        1 \\
        1 \\
        1 \\
        1+x_2
      \end{bmatrix}.
    \end{align*}
    Let $\tilde{u}^*:\real^2\to\real^4$ be its parametric solution and
    let $\tilde{u}_4^*:\real^2\to\real$ denote its fourth component,
    which is given by
    \begin{align*}
      \tilde{u}_4^*(x)=
      \begin{cases}
        0
        &\text{if} \ x_2\leq 0,
        \\
        \frac{x_2}{\sqrt{|x_1|}} \
        &\text{if} \ x_2\geq0, \ x_1\neq0,
          \frac{|x_1|}{2}\geq x_2,
        \\ 
        \frac{\sqrt{|x_1|}(x_2+1)}{|x_1|+2} \
        &\text{otherwise}.
      \end{cases}
    \end{align*}
    Let $x_1>0$ and define $p_{x_1}=(x_1,\frac{|x_1|}{2})$. Note that
    \begin{align*}
      \frac{\norm{\tilde{u}_4^*(p_{x_1})-\tilde{u}_4^*(0)}}{\norm{p_{x_1}
      - 0}} = \frac{1}{\sqrt{5 |x_1|}}. 
    \end{align*} 
    Since $x_1$ can be taken to be arbitrarily small, $\tilde{u}^*$ is
    not point-Lipschitz at the origin.  However, because $f$ and $g$,
    as well as their first and second derivatives with respect to $u$,
    are continuous in $u$ and $x$, and the rest of assumptions of
    Proposition~\ref{prop:regularity} hold (with $f(x,\cdot)$ strongly convex for all $x\in\real^n$), then by~\citep[Theorem
    5.3]{AVF-JK:85}, the corresponding parametric optimizer, and hence
    $\tilde{u}_4^*$, is continuous. \demo}
\end{example}

\begin{example}\longthmtitle{Discontinuous optimizer without Slater's
    condition}\label{ex:discontinuous-no-SC} {\rm The following
    example, taken from~\citep[Section VI]{BJM-MJP-ADA:15}, shows that
    if Slater's condition does not hold, then continuity of the
    parametric optimizer is not guaranteed even if the rest of
    assumptions from Proposition~\ref{prop:regularity} (even with $f(x,\cdot)$ strongly convex for all $x\in\real^n$) do hold:
    \begin{subequations}\label{eq:ames-counterex}
      \begin{align}    
        \hat{u}^*(x)
        &= \underset{u\in\real}{\text{argmin}}
          \frac{1}{2} u^2 - 2u,
        \\ 
        &\qquad \text{s.t.} \quad x u \leq 0.
      \end{align}
    \end{subequations}
    Indeed, the objective function and constraint
    of~\eqref{eq:ames-counterex} are twice 
    continuously differentiable, the objective function is strongly
    convex and the constraint is convex 
    for any $x\in\real$. However, Slater's condition does not hold at
    $x=0$. In fact,
    \begin{align*}
      \hat{u}^*(x) =
      \begin{cases}
        2 \quad \text{if} \quad x \leq 0,
        \\
    0 \quad \text{else},
      \end{cases}
    \end{align*}
    is discontinuous at $x=0$.  However, note that $\hat{u}^*$ is
    bounded.  \demo}
\end{example}

\begin{example}\longthmtitle{Unbounded optimizer without Slater's
    condition}\label{ex:unbounded-no-SC} {\rm The following example,
    adapted from~\citep[Example III.5]{MA-NA-JC:23-tac}, shows that if
    Slater's condition fails, not only can the parametric optimizer
    fail to be continuous, as shown in
    Example~\ref{ex:discontinuous-no-SC}, but it can even fail to be
    locally bounded. Let $x=(x_1,x_2)\in\real^2$,
    $a(x)=2x_1 x_2 + x_2^2(1-x_1^2-x_2^2)$, and consider:
  \begin{subequations}\label{eq:unbounded-counterex}
    \begin{align}
      \breve{u}^*(x)
      &= \underset{u\in\real}{\text{argmin}} \frac{1}{2}
        \norm{u}^2,
      \\ 
      &\qquad \text{s.t.} \quad a(x) + 2x_2^3 u \leq 0. 
    \end{align}
  \end{subequations}
  Note that Slater's condition does not hold at the point $x=(1,0)$.
  Moreover, $\breve{u}^*$ is given by:
  \begin{align*}
    \breve{u}^*(x) =
    \begin{cases}
      0 \quad \qquad \text{if} \quad a(x) \leq 0,
      \\
      -\frac{a(x)}{2x_2^3} \quad \text{else}.
    \end{cases}
  \end{align*}
  Note that $a(1,0)=0$ and $a(1,\epsilon)=2\epsilon - \epsilon^4$.
  Moreover, since any 
  neighborhood of $(1,0)$ contains points of the form $(1,\epsilon)$ 
  for sufficiently 
  small $\epsilon>0$, for any neighborhood $\Nc$ of $(1,0)$ 
  there exists $\epsilon_{\Nc}>0$ sufficiently small such that 
  $a(1,\epsilon_{\Nc})>0$. Now, since 
  \begin{align*}
    \lim\limits_{ \epsilon\to 0 } \frac{a(1,\epsilon)}{2\epsilon^3} = \infty,
  \end{align*}
  and $\breve{u}^*(x) = -\frac{a(x)}{2x_2^3}$ if $a(x)>0$,
  it follows that $\breve{u}^*$ is not locally bounded.  \demo}
\end{example}

Discontinuous controllers are relevant, and even necessary, in
multiple applications, cf.~\citep{JC:08-csm}. When dealing with
discontinuous systems, one needs to ensure basic properties such as
local boundedness and measurability. In the following, we provide
conditions that guarantee these properties for optimization-based
controllers.

The following result gives a condition which ensures that parametric
optimizers are locally bounded, hence precluding the behavior
exhibited in Example~\ref{ex:unbounded-no-SC}.

\begin{proposition}\longthmtitle{Conditions for local
    boundedness}\label{prop:suff-cond-local-boundedness}
  Suppose that $f$ and $g$ belong to
  $\Cc^{0}(\real^n\times\real^m)$. Further assume that for any
  $x\in\real^n$, $f(x,\cdot)$ is strictly convex, $g(x,\cdot)$ is
  convex, and~\eqref{eq:optim-based-controller} has at least one minimizer.
  %that the feasible set
  %$\setdef{u\in\real^m}{g(x,u)\leq 0}$ is nonempty.  
  Then, given
  $x_0\in\real^n$, $u^*$ is locally bounded at $x_0$ if and only if
  LCF holds at $x_0$.
\end{proposition}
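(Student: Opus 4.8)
The plan is to prove both implications of the biconditional separately, exploiting the convexity and continuity hypotheses together with the definitions of LCF and local boundedness.

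First I would prove the "if" direction: assume LCF holds at $x_0$, so there is a compact set $K \subset \real^m$ and $\delta > 0$ such that for every $y$ with $\norm{y-x_0} < \delta$ there exists $u \in K$ with $g(y,u) \le 0$. I would argue that $u^*(y)$ stays in a bounded set for all such $y$. The key point is a sublevel-set argument: since $f$ is continuous and $K$ is compact, $M := \max_{(y,u) \in \bar{B}_{\delta/2}(x_0) \times K} f(y,u)$ is finite, so for every $y$ in that ball the optimal value satisfies $f(y,u^*(y)) \le M$ (because some feasible $u \in K$ achieves value at most $M$). Hence $u^*(y)$ lies in the sublevel set $\{u : f(y,u) \le M\}$. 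Strict convexity of $f(y,\cdot)$ alone does not bound sublevel sets, so the main technical obstacle here is ruling out the possibility that these sublevel sets blow up as $y \to x_0$. I would handle this by a contradiction/compactness argument: if $\norm{u^*(y_k)} \to \infty$ along a sequence $y_k \to x_0$, then after normalizing and passing to a subsequence one finds a direction $v$, $\norm{v}=1$, along which $f$ fails to be coercive near $x_0$ in a way that — combined with the fact that the feasible sets are convex and contain a point of the fixed compact set $K$ — contradicts strict convexity / the existence of a (necessarily bounded-away) minimizer; alternatively one invokes that a continuous strictly convex function that is not coercive in some direction cannot attain its minimum there, again contradicting that $u^*(y_k)$ is the minimizer. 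This coercivity-uniformity step is the heart of the argument and where I would spend the most care.

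Next I would prove the "only if" direction by contraposition: assume LCF fails at $x_0$ and show $u^*$ is not locally bounded at $x_0$. If LCF fails, then for every compact $K$ and every $\delta > 0$ there exists $y$ with $\norm{y-x_0} < \delta$ such that the feasible set $\{u : g(y,u) \le 0\}$ meets the complement of $K$ — in fact, taking $K = \bar{B}_r(0)$ and $\delta = 1/k$, one extracts a sequence $y_k \to x_0$ for which the feasible set of the $y_k$-problem is nonempty (it has a minimizer by assumption) but is not contained in any fixed ball. I would then argue that the minimizer $u^*(y_k)$ itself must escape to infinity: since the feasible set at $y_k$ is convex, nonempty, closed, and unbounded, it contains a ray, and a strictly convex continuous function minimized over such a set — if the minimizer stayed bounded uniformly in $k$ — would force, via a limiting argument using continuity of $g$ at $x_0$, the feasible set at $x_0$ to also be unbounded but with a bounded minimizer, and more importantly it would contradict the failure of LCF, because a uniform bound on $u^*(y_k)$ together with $g(y_k, u^*(y_k)) \le 0$ would exhibit the compact set required by LCF. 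So the cleanest route is: suppose for contradiction $u^*$ is locally bounded at $x_0$; then there is a ball $B_\delta(x_0)$ and $R>0$ with $\norm{u^*(y)} \le R$ for all $y \in B_\delta(x_0)$; taking $K = \bar{B}_R(0)$ shows LCF holds at $x_0$ (the point $u^*(y) \in K$ is feasible for each such $y$), contradicting our assumption.

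Actually, on reflection the "only if" direction is almost immediate by that last observation and needs no convexity at all — local boundedness of $u^*$ near $x_0$ directly yields the compact set $K = \bar{B}_R(0)$ witnessing LCF. So the entire substance of the proposition is the "if" direction, and within it, the uniform-coercivity lemma that prevents the minimizers from escaping to infinity as the parameter varies near $x_0$; I expect that to be the main obstacle, and I would prove it by a normalized-subsequence contradiction argument as sketched above, using continuity of $f$ and $g$, convexity of the feasible sets, and strict convexity of $f(x,\cdot)$ together with the standing assumption that a minimizer exists for every parameter value.
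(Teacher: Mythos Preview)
Your approach is essentially the paper's: the ``only if'' direction is exactly the immediate observation you eventually arrive at (local boundedness of $u^*$ directly furnishes the compact set $K=\bar B_R(0)$ witnessing LCF, which the paper phrases by contraposition), and for the ``if'' direction the paper likewise first bounds the optimal value $f(y,u^*(y))$ via a feasible point in $K$. The paper then simply asserts that this bound on optimal values yields local boundedness of $u^*$ without spelling out the coercivity step you flag as the main obstacle; your normalized-subsequence sketch is the right way to complete it --- the limiting direction lies in the recession cone of the feasible set at $x_0$ with $f(x_0,\cdot)$ bounded along the corresponding feasible ray, which is incompatible with strict convexity together with the assumed existence of a constrained minimizer at~$x_0$.
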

\begin{proof}
  Note that since~\eqref{eq:optim-based-controller} has at least one minimizer
  for all $x\in\real^n$, the convexity assumptions on $f$ and $g$ imply that $u^*$ is a singleton 
  for all $x\in\real^n$.
  First suppose that LCF holds at $x_0$.  Therefore, there exists a
  compact set $K\subset\real^m$ and $\delta>0$ such that for all
  $y\in\real^n$ such that $\norm{y-x} < \delta$, there exists $u\in K$
  such that $g(y,u)\leq0$.  Since $f$ is continuous and $K$ is
  compact, there exists $B_f>0$ such that $|f(y,u)|<B_f$ for all
  $u\in K$ and $y\in\real^n$ such that $\norm{y-x_0}<\delta$.  Since
  for all $y\in\real^n$ such that $\norm{y-x_0}<\delta$, there exists
  a feasible $u\in K$, it follows that $|f(y,u^*(y))|<B_f$ for all
  $y\in\real^n$ such that $\norm{y-x_0}<\delta$. This implies that
  $u^*$ is locally bounded at $x_0$.  Now suppose that $u^*$ is
  locally bounded at $x_0$ and suppose, by contradiction, that LFC
  does not hold at $x_0$. Then, for any $\delta>0$ and compact set
  $K$, there exists $y\in\real^n$ with $\norm{y-x}\leq \delta$ and
  such that all $u\in\real^m$ with $g(y,u)\leq 0$ satisfy $u\notin K$.
  This means that there exists a sequence
  $\{ y_n \}_{n\in\mathbb{Z}_{>0}}$ such that $\norm{y_n-x}\leq 1/n$
  and $\norm{u^*(y_n)} \geq n$ for all $n\in\mathbb{Z}_{>0}$, which
  implies that $u^*$ is not locally bounded, hence reaching a
  contradiction.
\end{proof}

Verifying the local compact feasibility property can be challenging in
general.  However, for the particular case of CBF-based quadratic
programs,~\citep[Theorem V.1]{MA-NA-JC:23-tac} gives an alternative
sufficient condition for local boundedness of $u^*$ 
that only requires solving a specific linear equation.

Next, we turn our attention to the measurability properties of~$u^*$.

\begin{proposition}\longthmtitle{Sufficient conditions for
    measurability}\label{prop:suff-cond-meas}
  Suppose that $f$ and $g$ belong to
  $\Cc^{0}(\real^n\times\real^m)$. Further assume that for any
  $x\in\real^n$, $f(x,\cdot)$ is strictly convex, $g(x,\cdot)$ is
  convex, and~\eqref{eq:optim-based-controller} has at least one minimizer.
  %that the feasible set
  %$\setdef{u\in\real^m}{g(x,u)\leq 0}$ is nonempty.  
  Further assume
  that for every $x\in\real^n$, LCF holds at $x$.
  Then, $u^*$ is measurable.
\end{proposition}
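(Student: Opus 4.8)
The plan is to exploit that, by hypothesis, LCF holds at every point, so Proposition~\ref{prop:suff-cond-local-boundedness} makes $u^*$ locally bounded everywhere, and then to realize $u^*$ as a pointwise limit of continuous functions obtained from an exterior penalty. First note that strict convexity of $f(x,\cdot)$, convexity of $g(x,\cdot)$, and the existence of a minimizer guarantee that $u^*(x)$ is a well-defined single point for every $x\in\real^n$. Since measurability is a local property, it suffices to prove that $u^*$ is measurable on each closed ball $\bar{B}_R=\setdef{x\in\real^n}{\norm{x}\le R}$. By local boundedness, fix $\rho>0$ with $\norm{u^*(x)}\le\rho$ for all $x\in\bar{B}_R$, and let $\bar{B}_\rho\subset\real^m$ denote the closed ball of radius $\rho$ about the origin.

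Next I would set up the penalty. Define $\psi:\real^n\times\real^m\to\real$ by $\psi(x,u)=\max\{0,g_1(x,u),\dots,g_p(x,u)\}$. Since $g$ is continuous and each $g_i(x,\cdot)$ is convex, $\psi$ is continuous, $\psi(x,\cdot)$ is convex, $\psi\ge 0$, and $\psi(x,u)=0$ if and only if $u$ is feasible at $x$. For $\epsilon>0$, let $w_\epsilon(x)$ be the minimizer of $u\mapsto f(x,u)+\tfrac1\epsilon\psi(x,u)$ over the fixed compact convex set $\bar{B}_\rho$; it exists and is unique because the objective is strictly convex in $u$ (strictly convex $f(x,\cdot)$ plus convex penalty). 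Because this objective is jointly continuous in $(x,u)$ and the constraint set is the constant compact multifunction $\bar{B}_\rho$, Berge's maximum theorem implies that $w_\epsilon:\bar{B}_R\to\real^m$ is continuous.

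Then I would show $w_\epsilon(x)\to u^*(x)$ as $\epsilon\to0^+$ for each $x\in\bar{B}_R$. Evaluating the penalized objective at the feasible point $u^*(x)\in\bar{B}_\rho$ gives $f(x,w_\epsilon(x))+\tfrac1\epsilon\psi(x,w_\epsilon(x))\le f(x,u^*(x))$, whence $\psi(x,w_\epsilon(x))\le\epsilon\bigl(f(x,u^*(x))-\min_{u\in\bar{B}_\rho}f(x,u)\bigr)\to0$. By compactness of $\bar{B}_\rho$, any subsequential limit $\bar{w}$ of $w_\epsilon(x)$ satisfies $\psi(x,\bar{w})=0$, i.e., $\bar{w}$ is feasible at $x$, and $f(x,\bar{w})\le f(x,u^*(x))$ by passing to the limit. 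Since $\bar{w}$ and $u^*(x)$ both lie in the convex set $\bar{B}_\rho\cap\setdef{u}{g(x,u)\le0}$, on which $f(x,\cdot)$ is strictly convex, and $u^*(x)$ minimizes $f(x,\cdot)$ over the (larger) feasible set, uniqueness forces $\bar{w}=u^*(x)$; hence $w_\epsilon(x)\to u^*(x)$. Consequently $u^*|_{\bar{B}_R}$ is the pointwise limit of the continuous functions $w_{1/k}$, $k\in\integerspos$, so it is Borel (hence Lebesgue) measurable; letting $R\to\infty$ gives measurability of $u^*$ on $\real^n$.

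The step requiring the most care is the uniform choice of radius $\rho$ on each bounded region, so that $w_\epsilon$ is a genuine function on a fixed compact set and Berge's theorem applies; this is precisely where the local boundedness of Proposition~\ref{prop:suff-cond-local-boundedness}—and hence the standing assumption that LCF holds everywhere—is indispensable, together with checking that the exterior penalty preserves strict convexity in $u$. As an alternative route, one could invoke a measurable selection theorem directly: the constraint multifunction $x\mapsto\setdef{u}{g(x,u)\le0}$ has closed graph, hence is a measurable closed-valued multifunction, and $f$ is a Carath\'eodory integrand, so its argmin multifunction—here the singleton $\{u^*(x)\}$, nonempty by the existence hypothesis—admits a measurable selection, which must be $u^*$ itself.
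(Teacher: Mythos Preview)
Your proof is correct and takes a genuinely different route from the paper. The paper invokes the Measurable Maximum Theorem (Aliprantis--Border, Theorem~18.19): it argues that the feasible correspondence $x\mapsto\setdef{u}{g(x,u)\le 0}$ is weakly measurable with nonempty compact values (the compactness coming, as in your argument, from the local boundedness delivered by LCF via Proposition~\ref{prop:suff-cond-local-boundedness}), and then applies that theorem directly to conclude that the argmin map is measurable. Your approach instead realizes $u^*$ as a pointwise limit of continuous functions by combining an exterior penalty with Berge's maximum theorem on a fixed compact ball. This is more elementary and more constructive, and in fact yields a slightly stronger conclusion---$u^*$ restricted to each compact set is Baire class~1, not merely Borel measurable---at the cost of a longer argument. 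One small point worth making explicit: passing from ``$u^*$ is locally bounded at every point'' to ``$u^*$ is uniformly bounded on the compact set $\bar{B}_R$'' requires the standard finite-subcover compactness step, which you invoke implicitly when fixing a single~$\rho$. Your closing paragraph sketching the measurable-selection alternative is essentially the paper's own strategy.
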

\begin{proof}
  Note that since~\eqref{eq:optim-based-controller} has at least one minimizer
  for all $x\in\real^n$, the convexity assumptions on $f$ and $g$ imply that $u^*$ is a singleton 
  for all $x\in\real^n$.
  We use the Measurable Maximum Theorem~\citep[Theorem
  18.19]{CDA-KCB:99}. We assume that $\real^n$ and $\real^m$ are
  equipped with the usual Borel $\sigma$-algebras. Since $f$ is continuous, 
  it is a Carath\'{e}odory
  function (cf.~\citep[Definition 4.50]{CDA-KCB:99}). Therefore, we
  only need to ensure that the set-valued map
  $\phi: x \to \setdef{u\in\real^m}{g(x,u)\leq 0}$ is a weakly
  measurable correspondence (cf.~\citep[Definition 18.1]{CDA-KCB:99})
  with nonempty compact values.  The fact that $\phi$ takes nonempty
  values follows from the fact that the feasible set
  $\setdef{u\in\real^m}{g(x,u)\leq 0}$ is nonempty. Moreover, since
  Proposition~\ref{prop:suff-cond-local-boundedness} ensures that
  $u^*$ is locally bounded at every $x\in\real^n$, without loss of
  generality we can assume that $\phi$ takes compact values
  (otherwise, we can define extra constraints that ensure that the
  feasible set is bounded for every $x\in\real^n$ without changing the
  optimizer $u^*$).  Now, to show that $\phi$ is a weakly measurable
  correspondence, we follow an argument similar to the proof
  of~\citep[Corollary 18.8]{CDA-KCB:99}. For every
  $n\in\mathbb{Z}_{>0}$, define the set-valued map
  $\phi_n: x \to \setdef{u\in\real^m}{g(x,u)\leq 1/n}$.  By
  Lemma~\citep[Corollary 18.8]{CDA-KCB:99}, $\phi_n$ is measurable.
  Moreover, for every $x\in\real^n$ and $n\in\mathbb{Z}_{>0}$,
  $\phi(x)\subset \partial(\phi_n(x))$ (where $\partial(\phi_n(x))$
  denotes the boundary of $\phi_n(x)$), and
  $\phi(x) = \cap_{n=1}^{\infty} \partial(\phi_n(x))$.  Furthermore,
  again without loss of generality, $\partial(\phi_n)$ has compact
  values for every $n\in\mathbb{Z}_{>0}$ (again, if that is not the
  case we can define extra constraints that ensure that this holds 
  without changing the optimizer $u^*$), and by~\citep[Theorem
  18.4(3)]{CDA-KCB:99}, the intersection
  $\phi: x\to \cap_{n=1}^{\infty} \partial(\phi_n(x))$ is measurable.
\end{proof}

The second column in Table~\ref{tab:1} summarizes the different
results discussed in this section.
% The following remark considers some practical considerations of the
% results presented in this section.

\begin{remark}\longthmtitle{Verifying constraint qualifications and conditions
  in practice without knowledge of the
  optimizer}\label{rem:practical-considerations} 
\rm{To show that $u^*$ is locally Lipschitz at a point $x\in\real^n$
  using ~\citep[Theorem 3.6]{JL:95}, we need to verify that both MFCQ
  and CR hold at $(x,u^*(x))$. Similarly,~\citep[Theorem 4.1]{SMR:80}
  (resp.~\citep[Theorem 2.1]{AVF:76}) require the verification of LICQ
  (resp. LICQ and SCS) at $(x,u^*(x))$. These results require
  knowledge of $u^*(x)$ to verify the corresponding property holds
  at~$x$. However, in several applications it can be useful to know
  the regions where the controller $u^*$ is discontinuous (for
  instance, to design safety-critical controllers that avoid such
  regions).  Slater's condition is useful for this purpose because
  Proposition~\ref{prop:regularity} guarantees different regularity
  properties of $u^*$ at $x$ without requiring knowledge of $u^*(x)$
  (assuming that the extra conditions on differentiability and
  convexity of the objective function and constraints in
  Proposition~\ref{prop:regularity} also hold).  Moreover, in the
  special case where the constraints
  in~\eqref{eq:optim-based-controller} are affine, i.e.,
    \begin{align*}
      g(x,u) =
      \begin{pmatrix}
        g_1^0(x)^T u + g_1^1(x) \\
        \vdots \\
        g_i^0(x)^T u + g_i^1(x) \\
        \vdots \\
        g_p^0(x)^T u + g_p^1(x)
      \end{pmatrix},
    \end{align*}
    for $i\in\{1,\hdots,p\}$ and $g_i^0:\real^n\to\real^m$ and
    $g_i^1:\real^n\to\real$ in $\Cc^2(\real^n)$, then~\citep{WBC:22}
    shows that by letting $c_x^*$ be the optimal value of the linear
    program
    \begin{subequations}
      \begin{align}
        &\max\limits_{u\in\real^m} \sum_{i=1}^m |u_i| \\
        &\text{s.t.} \quad u_i \geq 0, \ i\in\{1,\hdots,m\}, \\
        &\qquad \sum_{i=1}^m u_i g_i^0(x) = 0, \\
        &\qquad \sum_{i=1}^m u_i g_i^1(x) = 0.
      \end{align}
      \label{eq:linear-program-ustar}
    \end{subequations}
    then Slater's condition holds at $x$ if and only if
    $c_x^*=0$. Hence,~\eqref{eq:linear-program-ustar} can be solved
    before solving~\eqref{eq:optim-based-controller} to verify that
    $u^*$ satisfies the regularity properties in
    Proposition~\ref{prop:regularity}.  \demo }
\end{remark}

\section{Existence and Uniqueness of Solutions under
  Optimization-Based
  Controllers}\label{sec:existence-uniqueness-conditions}

In this section, we leverage the regularity properties established in
Section~\ref{sec:regularity-properties} to study existence and
uniqueness of solutions for the closed-loop
system~\eqref{eq:closed-loop} under the optimization-based
controller~$u^*$.

First, we note that by the Picard-Lindel\"of theorem~\citep[Theorem
2.2]{GT:12-ams}, any of the assumptions described in
Section~\ref{sec:regularity-properties} that guarantee that $u^*$ is
locally Lipschitz at a point $x_0$ also guarantee that the closed-loop
system~\eqref{eq:closed-loop} has a unique solution with initial
condition at $x_0$ for sufficiently small times.

The following result establishes existence of solutions under weaker
assumptions.

\begin{proposition}\longthmtitle{Existence of classical solutions for the
    closed-loop system}~\label{prop:existence-sols}
  Suppose that $f$ and $g$ belong to
  $\Cc^{0,2}(\real^n\times\real^m)$. Further assume that for any
  $x\in\real^n$, $f(x,\cdot)$ is strictly convex, $g(x,\cdot)$ is
  convex, and~\eqref{eq:optim-based-controller} has at least one minimizer. 
  Further assume that SC holds at $x_0\in\real^n$.  
  Let
  $F:\real^n\times\real^m\to\real^n$ be locally Lipschitz. Then,
  there exists $\delta_0 > 0$ such that the differential equation
  \eqref{eq:closed-loop}
  %\begin{align*}
  %  \dot{x}=F(x,u^*(x))
  %\end{align*}
  has at least one solution
  $x:(-\delta_0,\delta_0)\to\real^n$ with initial condition $x(0)=x_0$.
\end{proposition}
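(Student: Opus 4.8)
The plan is to invoke Peano's existence theorem, which only requires the right-hand side of~\eqref{eq:closed-loop} to be continuous on a neighborhood of~$x_0$. Since $F$ is locally Lipschitz, hence continuous, the crux is to show that the parametric optimizer $u^*$ is continuous on a neighborhood of~$x_0$; the composition $x \mapsto F(x,u^*(x))$ is then continuous there, and Peano yields a solution on some interval $(-\delta_0,\delta_0)$ with $x(0)=x_0$.

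First I would observe, exactly as at the start of the proof of Proposition~\ref{prop:regularity}, that the convexity of $f(x,\cdot)$ and $g(x,\cdot)$ together with the existence of a minimizer for every $x$ guarantee that $u^*(x)$ is a well-defined singleton for all $x\in\real^n$, so that $u^*:\real^n\to\real^m$ is genuinely a function. Next, since $g$ is continuous and SC holds at $x_0$, i.e., there is $\hat u\in\real^m$ with $g_i(x_0,\hat u)<0$ for all $i\in\{1,\dots,p\}$, the same $\hat u$ witnesses SC for every $y$ in a sufficiently small neighborhood $\Vc_{x_0}$ of $x_0$. For each such $y$, the convexity of $g(y,\cdot)$ together with SC implies, by~\citep[Prop.\ 5.39]{NA-AE-MP:20}, that MFCQ holds at $(y,u^*(y))$.

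With MFCQ in hand throughout $\Vc_{x_0}$, and using the $\Cc^{0,2}$ regularity of $f$ and $g$ (continuity in $(x,u)$ and twice continuous differentiability of $f(x,\cdot)$, $g(x,\cdot)$ in $u$ with the derivatives continuous in $(x,u)$), \citep[Theorem 5.3]{AVF-JK:85} applies and shows that $u^*$ is continuous on $\Vc_{x_0}$; this is precisely the continuity statement recalled in Section~\ref{sec:regularity-properties}, and, as Example~\ref{ex:no-PL-without-diff-wrt-parameter} illustrates, it does not require differentiability of the problem data with respect to the parameter. Consequently the map $x\mapsto F(x,u^*(x))$ is continuous on the open set $\Vc_{x_0}$, in particular locally bounded near $x_0$, and Peano's existence theorem furnishes $\delta_0>0$ and a solution $x:(-\delta_0,\delta_0)\to\real^n$ of~\eqref{eq:closed-loop} with $x(0)=x_0$.

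The delicate points are bookkeeping rather than substantive: verifying that SC genuinely propagates to a neighborhood (immediate from continuity of $g$ and the use of a single Slater point), and confirming that~\citep[Theorem 5.3]{AVF-JK:85} is applicable under the weaker $\Cc^{0,2}$ hypothesis rather than full $\Cc^2$, which is the case since that result only exploits smoothness of the data in the decision variable. I would not attempt to claim uniqueness here, consistent with the fact that $u^*$ is only known to be continuous (cf.\ the discontinuity and non-Lipschitz behavior catalogued in Table~\ref{tab:1}).
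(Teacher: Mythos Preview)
Your proposal is correct and follows essentially the same route as the paper: show that SC propagates to a neighborhood by continuity of $g$, deduce MFCQ via \citep[Prop.~5.39]{NA-AE-MP:20}, invoke \citep[Theorem~5.3]{AVF-JK:85} to obtain continuity of $u^*$ there, and then apply Peano's existence theorem. The only difference is organizational (you extend SC to a neighborhood first and then argue MFCQ and continuity uniformly, whereas the paper treats $x_0$ first and then repeats the argument on the neighborhood), which is immaterial.
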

\begin{proof} Note that
    since~\eqref{eq:optim-based-controller} has at least one minimizer
    for all $x\in\real^n$, the convexity assumptions on $f$ and $g$
    imply that $u^*$ is a singleton for all $x\in\real^n$.
    Since SC holds at $x_0$,
    by~\citep[Prop. 5.39]{NA-AE-MP:20}, since $g(x_0,\cdot)$ is
    convex, MFCQ holds at $(x_0,u^*(x_0))$.  By~\citep[Theorem
    5.3]{AVF-JK:85}, this implies that $u^*$ is continuous at $x_0$.
    Since $g$ is continuous, there exists a neighborhood $\Vc_{x_0}$
    of $x_0$ such that SC holds for all $y\in\Vc_{x_0}$. By the same
    argument, this implies that $u^*$ is continuous in $\Vc_{x_0}$.
    The result now follows by Peano's existence theorem~\citep[Theorem
    2.1]{EAC-NL:55}.
\end{proof}

Proposition~\ref{prop:existence-sols} implies in
  particular that, under the assumptions of
  Proposition~\ref{prop:regularity}, the closed-loop
  system~\eqref{eq:closed-loop} has at least one solution in a
  neighborhood of $x_0$.

Next, we study uniqueness of solutions under the assumptions of
Proposition~\ref{prop:regularity}. We first note that the H\"older
property does not imply uniqueness, even in simple one-dimensional
examples. For example, the differential equation $\dot{x}=x^{1/3}$ has
the H\"older property at $0$ but infinitely many solutions starting
from the origin.  The next example shows that, in general,
point-Lipschitz continuity does not imply uniqueness of solutions
either.

\begin{example}\longthmtitle{Point-Lipschitz differential equation
    with non-unique solutions}\label{ex:point-lip-nonunique}
  {\rm Let $u^*:\real^2\to\real^4$ be the parametric optimizer of
    Robinson's counterexample.  Consider the dynamical system
    \begin{subequations}\label{eq:counterexample-point-lip-nonuniqueness}
      \begin{align}
        \dot{x}_1 &= \frac{1}{2},
        \\
        \dot{x}_2 &= u^*_4(x_1,x_2) ,
      \end{align}
    \end{subequations}
    with initial condition $(x_1(0),x_2(0))=(0,0)$.
    % Note that $u^*_4(x_1,x_2)=\frac{x_2}{x_1}$ if $0\leq x_2 \leq
    % \frac{1}{2}x_1^2$ and $u^*_4(0,0)=0$.
    By Proposition~\ref{prop:regularity}, the vector field
    in~\eqref{eq:counterexample-point-lip-nonuniqueness} is
    point-Lipschitz at the origin. However,~\eqref{eq:counterexample-point-lip-nonuniqueness} 
    admits the
    following two distinct solutions starting from the origin:
    $y_1(t):=(\frac{1}{2}t,0)$ and
    $y_2(t):=(\frac{1}{2}t,\frac{1}{8}t^2)$,
    cf. Figure~\ref{fig:counterex-vector-field}.  \demo }
\end{example}

\begin{figure}[htb]
  \centering
  %{\includegraphics[width=0.99\linewidth]{}}
  {\includegraphics[width=0.99\linewidth]{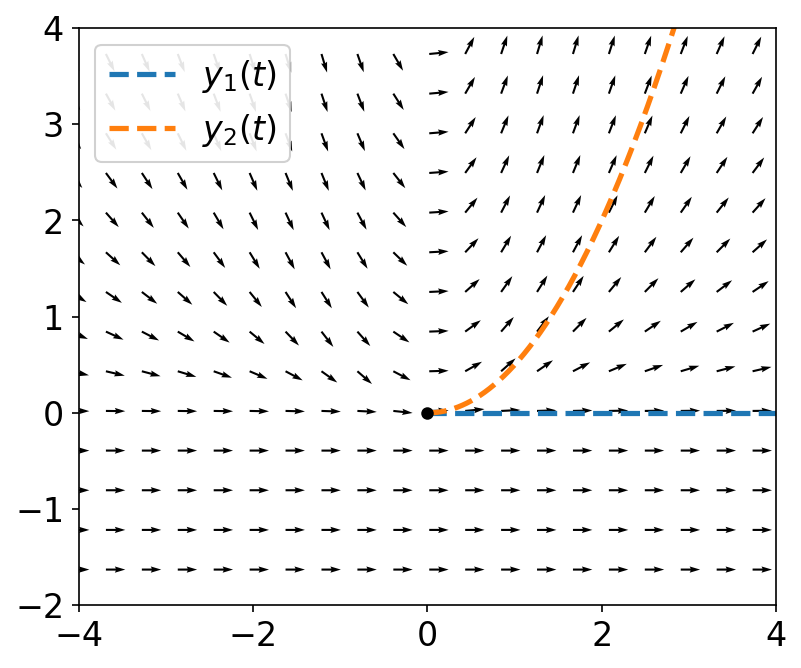}}
  \caption{The arrows depict the vector
    field~\eqref{eq:counterexample-point-lip-nonuniqueness}. The
    dashed blue and orange curves depict the two solutions $y_1$ and
    $y_2$ starting from the origin, where the vector field is
    point-Lipschitz but not locally Lipschitz.}
  \label{fig:counterex-vector-field}
\end{figure}

Hence, in general the assumptions of
Proposition~\ref{prop:regularity} are not sufficient to ensure
uniqueness of solutions of the closed-loop system.  Interestingly, the
next result shows that point-Lipschitz continuity guarantees
uniqueness of solutions starting from equilibria.

\begin{proposition}\longthmtitle{Point-Lipschitz continuity and
    uniqueness}\label{prop:point-lipschitzness-uniqueness}
  Let $\tilde{F}:\real^n\to\real^n$ be point-Lipschitz at
  $x_0 \in \real^n$ and $\tilde{F}(x_0)=0$. Then
  the function $x(t)=x_0$ for all $t\geq 0$ is the unique solution to 
  the differential equation
  $\dot{x}=\tilde{F}(x)$ with initial condition $x(0)=x_0$. 
\end{proposition}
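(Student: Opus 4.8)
The plan is to show that any solution is trapped at $x_0$ by combining a short-time a priori estimate with a continuation argument. Let $x:[0,T)\to\real^n$ be any solution of $\dot{x}=\tilde{F}(x)$ with $x(0)=x_0$, so that $x(t)=x_0+\int_0^t \tilde{F}(x(s))\,ds$ for all $t\in[0,T)$. Since $\tilde{F}$ is point-Lipschitz at $x_0$, there is a neighborhood $\Uc$ of $x_0$ and a constant $L\geq 0$ with $\norm{\tilde{F}(x)}=\norm{\tilde{F}(x)-\tilde{F}(x_0)}\leq L\norm{x-x_0}$ for all $x\in\Uc$, where I used the hypothesis $\tilde{F}(x_0)=0$.

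First I would establish the claim on a short initial interval. By continuity of $x$, choose $\tau\in(0,T)$ small enough that $x(s)\in\Uc$ for all $s\in[0,\tau]$ and, if $L>0$, also $L\tau<1$ (if $L=0$ then $\tilde{F}$ vanishes on $\Uc$ and the argument below is immediate). Set $M:=\max_{s\in[0,\tau]}\norm{x(s)-x_0}$. For each $t\in[0,\tau]$ the integral equation and the estimate above give $\norm{x(t)-x_0}\leq\int_0^t\norm{\tilde{F}(x(s))}\,ds\leq\int_0^t L\norm{x(s)-x_0}\,ds\leq L\tau M$; taking the maximum over $t\in[0,\tau]$ yields $M\leq L\tau M$, hence $(1-L\tau)M\leq 0$, which forces $M=0$. (Alternatively, one may invoke Gr\"onwall's inequality directly to obtain $\norm{x(t)-x_0}\leq 0$.) Thus $x(t)=x_0$ for all $t\in[0,\tau]$.

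Second, I would extend this to all of $[0,T)$ by a standard connectedness argument. Let $T^*:=\sup\setdef{t\in[0,T)}{x(s)=x_0\text{ for all }s\in[0,t]}$, which is positive by the previous step. If $T^*<T$, then continuity gives $x(T^*)=x_0$, and since $\tilde{F}$ is point-Lipschitz at $x_0$ and $x(T^*)=x_0$, the same a priori estimate applied on a small interval $[T^*,T^*+\tau']$ shows $x(t)=x_0$ there, contradicting the maximality of $T^*$. Hence $T^*=T$ and $x\equiv x_0$ on $[0,T)$; as this holds for any solution, $x(t)=x_0$ for all $t\geq 0$ is the unique solution.

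I expect the only genuinely delicate point to be the localization: point-Lipschitzness is assumed only at the single point $x_0$, so the bound $\norm{\tilde{F}(x(s))}\leq L\norm{x(s)-x_0}$ is available only while the trajectory stays in the neighborhood $\Uc$. The argument must therefore first confine the solution to $\Uc$ on a short time interval (possible by continuity of $x$) before the contraction estimate can be applied, and then bootstrap via the supremum $T^*$; everything else is routine.
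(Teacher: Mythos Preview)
Your proof is correct and follows essentially the same approach as the paper: both use the integral representation of the solution together with the point-Lipschitz bound to obtain $M\leq L\tau M$ for $L\tau<1$ on a short interval, forcing $M=0$, and then extend by repetition/continuation. If anything, your version is slightly more careful than the paper's in explicitly using continuity of the solution to confine the trajectory to the point-Lipschitz neighborhood~$\Uc$ before applying the estimate; the paper's proof leaves this localization implicit.
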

\begin{proof}
  Let $\delta > 0$ and $L$ be the point-Lipschitz continuity constant
  of $\tilde{F}$ and take $\delta < \frac{1}{L}$.  Suppose that there
  exists another solution $y:[0,\delta)\to\real^n$ starting from
  $x_0$. Then, $\sup_{t\in[0,\delta)}\norm{y(t)-x_0} > 0$. Moreover,
  \begin{align*}
    &\sup\limits_{t\in[0,\delta)}\norm{y(t)-x_0} =
      \sup\limits_{t\in[0,\delta)}\norm{\int_0^t \tilde{F}(y(s))ds} =
    \\
    &\sup\limits_{t\in[0,\delta)}\norm{\int_0^t \Big(
      \tilde{F}(y(s))-\tilde{F}(x_0) \Big) ds} \leq
    \\
    &\sup\limits_{t\in[0,\delta)} \int_0^t L\norm{y(s)-x_0}ds
      \leq 
    %L\delta \sup\limits_{t\in[0,\delta)} \sup_{s\in[0,t]}\norm{y(s)-x_0}
    \\
    & L\delta \sup_{t\in[0,\delta]}\norm{y(t)-x_0}
      <\sup_{t\in[0,\delta]}\norm{y(t)-x_0}
  \end{align*}
  where in the last inequality we have used the fact that
  $\sup_{t \in [0,\delta)}\norm{y(t)-x_0} > 0$. We hence reach a
  contradiction, which means that the constant solution is the only
  solution for $t\in[0,\delta)$. By repeating the same argument at
  time $\delta$, we can extend this constant solution for all positive
  times.
\end{proof}

This result implies that in one dimension, point-Lipschitz ODEs have
unique solutions.

\begin{corollary}\longthmtitle{Point-Lipschitz continuity implies uniqueness
    in one dimension}\label{cor:pl-1d-uniqueness}
  Let $\tilde{F}:\real\to\real$ be continuous in a neighborhood of
  $x_0$ and point-Lipschitz at $x_0$.  Then, the differential equation
  $\dot{x}=\tilde{F}(x)$ with initial condition $x(0)=x_0$ has a
  unique solution.
\end{corollary}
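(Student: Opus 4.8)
The plan is to split the argument according to whether $\tilde F(x_0)$ vanishes. If $\tilde F(x_0)=0$, then $x_0$ is an equilibrium and Proposition~\ref{prop:point-lipschitzness-uniqueness} applies verbatim: the constant curve $x(t)=x_0$ is the unique solution through $x_0$. (Existence is trivial in this case, and in general follows from Peano's theorem~\citep[Theorem 2.1]{EAC-NL:55} since $\tilde F$ is continuous near $x_0$.) Hence the whole content of the corollary lies in the complementary case.

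Suppose then $\tilde F(x_0)\neq 0$; notably, in this case only the continuity of $\tilde F$ is used. Assume without loss of generality $\tilde F(x_0)>0$ (otherwise replace $\tilde F$ by $-\tilde F$, equivalently reverse time). By continuity there is $\epsilon>0$ such that $\tilde F$ is continuous and strictly positive on $I:=(x_0-\epsilon,x_0+\epsilon)$. If $x(\cdot)$ is any solution with $x(0)=x_0$, then continuity of $x$ yields $\delta>0$ with $x(t)\in I$ for $t\in[0,\delta)$, so $\dot x(t)=\tilde F(x(t))>0$ there; thus $x$ is strictly increasing on $[0,\delta)$ and is a bijection onto an interval $[x_0,x_0+\eta)$ for some $\eta>0$.

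The crux is then a separation-of-variables step. The map $G(y):=\int_{x_0}^{y}\frac{ds}{\tilde F(s)}$ is well defined, $C^1$, and strictly increasing on $[x_0,x_0+\epsilon)$, because $1/\tilde F$ is continuous and positive there; and the inverse $\tau(\cdot)$ of the strictly increasing function $x(\cdot)$ satisfies $\tau'(\xi)=1/\dot x = 1/\tilde F(\xi)$, so $\tau=G$ on $[x_0,x_0+\eta)$. Consequently $x(t)=G^{-1}(t)$ for all $t$ in a neighborhood of $0$. Since $G$, and hence $G^{-1}$, depends only on $\tilde F$ and not on the chosen solution, any two solutions starting at $x_0$ agree near $t=0$, which is the asserted uniqueness; and since $G^{-1}$ is differentiable with $(G^{-1})'(t)=\tilde F(G^{-1}(t))$, it is itself a solution, so one even recovers existence directly.

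The \emph{main obstacle}, or rather the only step requiring care beyond quoting Proposition~\ref{prop:point-lipschitzness-uniqueness}, is this reparametrization in the case $\tilde F(x_0)\neq 0$: one must verify that a solution stays strictly monotone near $t=0$ (so that inverting $t\mapsto x(t)$ is legitimate) and that the resulting integral $G$ is finite and invertible near $x_0$, both of which rest on $\tilde F$ being continuous and non-vanishing on $I$. One should also note that the conclusion is inherently local (uniqueness on a sufficiently small time interval, in the spirit of the Picard-Lindel\"of result): a solution may later reach a zero of $\tilde F$ at which the hypotheses no longer hold, so this cannot in general be upgraded to global uniqueness beyond the equilibrium case already handled by Proposition~\ref{prop:point-lipschitzness-uniqueness}.
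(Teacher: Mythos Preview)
Your proof is correct and follows the same case split as the paper: for $\tilde F(x_0)=0$ invoke Proposition~\ref{prop:point-lipschitzness-uniqueness}, and for $\tilde F(x_0)\neq 0$ use the classical one-dimensional fact that a continuous scalar ODE has a unique solution through any non-equilibrium point. The only difference is presentational: the paper dispatches the non-equilibrium case by citing~\citep[Theorem~1.2.7]{RPA-VL:93}, whereas you reproduce the standard separation-of-variables proof of that theorem (inverting the strictly monotone solution and identifying its inverse with $G(y)=\int_{x_0}^{y}\tilde F(s)^{-1}\,ds$), making your argument self-contained at the cost of a few extra lines.
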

\begin{proof}
  If $\tilde{F}(x_0)\neq0$, by~\citep[Theorem~1.2.7]{RPA-VL:93}, the
  differential equation has only one solution. If $\tilde{F}(x_0)=0$,
  the result follows from
  Proposition~\ref{prop:point-lipschitzness-uniqueness}.
\end{proof}

If Slater's condition does not hold but the rest of assumptions of
Proposition~\ref{prop:regularity} hold
(even with $f(x,\cdot)$ strongly convex for all $x\in\real^n$),
Example~\ref{ex:discontinuous-no-SC} shows that $u^*$ can be
discontinuous, in which case neither existence nor uniqueness of
solutions is guaranteed. 
In the case where $f$ and $g$ are not
differentiable with respect to the parameter, but the rest of
assumptions of Proposition~\ref{prop:regularity} hold 
(even with $f(x,\cdot)$ strongly convex for all $x\in\real^n$),
Example~\ref{ex:no-PL-without-diff-wrt-parameter} shows that $u^*$ is
continuous but not necessarily point-Lipschitz. Therefore, in this
case existence is guaranteed but uniqueness is not.

Note that so far, we have only considered existence and uniqueness for
classical solutions.  For discontinuous dynamical systems, other
notions of solution, such as Carath\'{e}odory or Filippov solutions,
can be defined, cf.~\citep{JC:08-csm}.  The following result gives
conditions on~\eqref{eq:optim-based-controller} that ensure the
existence of Filippov solutions for~\eqref{eq:closed-loop}.

\begin{proposition}\longthmtitle{Existence of Filippov
    solutions for the closed-loop system}\label{prop:existence-filippov}
  Suppose that $f$ and $g$ belong to
  $\Cc^{2}(\real^n\times\real^m)$.  Further assume that for any
  $x\in\real^n$, $f(x,\cdot)$ is strictly convex,
  $g(x,\cdot)$ is
  convex and~\eqref{eq:optim-based-controller} has at least one minimizer.  Finally suppose that for
  every $x\in\real^n$, LCF holds at $x$ and
  $F:\real^n\times\real^m\to\real^n$ is locally Lipschitz.  Then for
  any $x\in\real^n$, there exists $\delta_x>0$ such
  that~\eqref{eq:closed-loop} has at least one Filippov solution
  $y:(-\delta_x,\delta_x)\to\real^n$ with initial condition $y(0)=x$.
\end{proposition}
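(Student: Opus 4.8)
The plan is to reduce the claim to the classical existence theorem for Filippov solutions of a differential inclusion whose right-hand side is measurable and locally bounded. Writing $X(x) := F(x,u^*(x))$, the three properties we need are that $X$ is single-valued, measurable, and locally bounded on $\real^n$. As in the earlier results of this section, since $f(x,\cdot)$ is strictly convex, $g(x,\cdot)$ is convex, and~\eqref{eq:optim-based-controller} has a minimizer for every $x$, the optimizer $u^*(x)$ is a singleton for all $x$, so $X:\real^n\to\real^n$ is a genuine function. Since LCF holds at every $x\in\real^n$, Proposition~\ref{prop:suff-cond-local-boundedness} gives that $u^*$ is locally bounded at every point, and Proposition~\ref{prop:suff-cond-meas} gives that $u^*$ is measurable.

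Next I would transfer these two properties to the composition $X$. Because $F$ is locally Lipschitz, hence continuous, the map $x\mapsto F(x,u^*(x))$ is measurable (it is the composition of the measurable map $x\mapsto(x,u^*(x))$ with the continuous $F$). For local boundedness, fix $x_0\in\real^n$ and, using Proposition~\ref{prop:suff-cond-local-boundedness}, pick a bounded neighborhood $U$ of $x_0$ and a bounded set $B\subset\real^m$ with $u^*(U)\subset B$; then $\overline{U}\times\overline{B}$ is compact, so continuity of $F$ implies $X$ is bounded on $U$. Hence $X$ is measurable and locally bounded on all of $\real^n$.

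Finally, I would invoke the standard construction for discontinuous vector fields (cf.~\citep{JC:08-csm}): for a measurable, locally bounded $X$, the Filippov set-valued map $x\mapsto\mathcal{F}[X](x)$ is upper semicontinuous with nonempty, compact, convex values and is itself locally bounded. The classical existence theorem for differential inclusions with such a right-hand side then yields, for each $x\in\real^n$, some $\delta_x>0$ and an absolutely continuous $y:(-\delta_x,\delta_x)\to\real^n$ with $y(0)=x$ and $\dot{y}(t)\in\mathcal{F}[X](y(t))$ for almost every $t$; by definition, $y$ is a Filippov solution of~\eqref{eq:closed-loop}, which establishes the claim.

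The main obstacle is essentially bookkeeping rather than analysis: one must be careful that local boundedness of $u^*$ really does transfer to local boundedness of $F(\cdot,u^*(\cdot))$, which is where the compactness argument above (using continuity of $F$ to bound it on $\overline{U}\times\overline{B}$) is needed, and one must cite the correct form of the Filippov existence theorem. The genuinely nontrivial content — upper semicontinuity, compactness and convexity of the regularized map, and existence of solutions to the resulting inclusion — is classical and is imported wholesale from~\citep{JC:08-csm} and the underlying Filippov theory.
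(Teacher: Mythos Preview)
Your proposal is correct and follows essentially the same approach as the paper: the paper's proof observes that $u^*$ is single-valued, invokes Propositions~\ref{prop:suff-cond-local-boundedness} and~\ref{prop:suff-cond-meas} to get local boundedness and measurability of $u^*$, and then cites Filippov's existence theorem directly (via~\citep[Theorem 7]{AFF:88} rather than~\citep{JC:08-csm}). Your version is simply more explicit about the intermediate step of transferring these properties from $u^*$ to the composition $x\mapsto F(x,u^*(x))$, which the paper leaves implicit.
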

\begin{proof}
  Note that since~\eqref{eq:optim-based-controller} has at least one minimizer
  for all $x\in\real^n$, the convexity assumptions on $f$ and $g$ imply that $u^*$ is a singleton 
  for all $x\in\real^n$.
  By Propositions~\ref{prop:suff-cond-local-boundedness}
  and~\ref{prop:suff-cond-meas}, the assumptions ensure that $u^*$ is
  measurable and locally bounded.  The result follows
  from~\citep[Theorem 7]{AFF:88}.
\end{proof}

A weaker condition to ensure that Filippov solutions are unique is
that the closed-loop system~\eqref{eq:closed-loop} is
\emph{essentially one-sided Lipschitz}~\citep{JC:08-csm}.  Although it
is known how to verify this property for projected dynamical systems
(see e.g., \citep[proof of Theorem~2.7]{AN-DZ:96} for the Euclidean
case, and \citep[Proposition~6.12]{AH-SB-FD:21} for the Non-Euclidean
case), to the best of our knowledge there exist no results in the
parametric optimization literature that guarantee that the parametric
optimizer $u^*$, or the closed-loop dynamics, satisfies this property.
%%% Even if the parametric optimizer is one-sided Lipschitz, that doesn't guarantee the closed-loop is
%%% since, unlike the classical notion of Lipschitzness, this property is not preserved under function composition. 

Finally, we note that if $f$ is not strictly convex or $g$ is not
convex, the optimizer $u^*$ is not guaranteed to be single-valued,
which means that the usual notions of regularity of the controller and
of solutions of the closed-loop system are not well defined. The third
and fourth columns of Table~\ref{tab:1} summarize the results
presented in this section.

\section{Forward Invariance Properties of Optimization-Based Controllers}
\label{sec:forward-invariance}
In this section we study conditions that guarantee the forward
invariance of a set for the closed-loop system under an
optimization-based controller.  Recall the notion of
\textit{tangent cone} to a set $\Cc\subset\real^n$:

% \marginAA{This definition of the tangent cone (i.e. the Clarke tangent) is different from the Bouligand tangent cone which is the one 
% used in the version of Nagumo's theorem we present. For convex sets, both definitions result in the same set, but in general they are not equivalent.}
% \begin{definition}\label{def:tangent-cone}
%   The \textit{tangent cone} to $\Cc\subset\real^n$ at $x\in\real^n$ is
%   % \begin{align*}
%   %   T_{\Cc}(x) = \setdef{d\in\real^n}{\exists\{ t^{\nu} \}_{\nu=1}^{\infty} \subset (0,\infty), \{ x^{\nu} \}_{\nu=1}^{\infty} \subset \Cc \\
%   %   t^{\nu} \to 0^{+}, \ x^{\nu} \to x, \ \frac{ x^{\nu} - x }{t^{\nu} } \to d \ \emph{as} \ \nu \to \infty }.
%   % \end{align*}
%   \begin{align*}
%     T_{\Cc}(x) = \bigl\{ v\in\real^n \ \big\rvert \  \liminf\limits_{h\to0} \frac{\textnormal{dist}(x+hv,\Cc)}{h} = 0 \bigl\}.
%   \end{align*}
% \end{definition}
% \smallskip

The basic result concerning forward invariance is the following:
% \marginAA{Perhaps we should also cite the review paper by Blanchini, which is in English and easily accessible, though
% the version of the result is slightly different from the one there (here we have locally unique solutions whereas there it 
% is globally unique?). Which source did this result come from and what accounts for the difference? }
\begin{theorem}\longthmtitle{Nagumo's Theorem~\citep{MN:42,FB:99}}\label{thm:nagumos-theorem}
  Let $\tilde{F}:\real^n\to\real^n$ and consider the system
  $\dot{x} = \tilde{F}(x)$. Assume that, for each initial condition in
  a set $\Dc \subset \real^n$, it admits a unique forward complete
  solution (i.e., a unique solution defined for all positive times).
  Let $\Cc \subset \Dc \subset \real^n$ be a closed set. Then the set
  $\Cc$ is forward invariant for the system if and only if
  $\tilde{F}(x) \in T_{\Cc}(x)$ for all $x\in\Cc$ (here, $T_{\Cc}(x)$
  is the \textit{tangent cone}\footnote{Recall that
    $ T_{\Cc}(x) = \bigl\{ v\in\real^n \ \big\rvert \
    \liminf\limits_{h\to0} \frac{\textnormal{dist}(x+hv,\Cc)}{h} = 0
    \bigl\}$.} to $\Cc\subset\real^n$ at $x\in\real^n$).
\end{theorem}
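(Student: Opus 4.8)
The plan is to prove the two implications separately, since they are of very different character. For the ``only if'' direction (forward invariance $\Rightarrow$ tangent cone condition), I would fix $x \in \Cc$ and let $\phi$ be the solution with $\phi(0)=x$. Forward invariance gives $\phi(h)\in\Cc$ for all small $h>0$, hence $\textnormal{dist}(x+h\tilde F(x),\Cc)\le \norm{(x+h\tilde F(x))-\phi(h)} = \norm{h\tilde F(x)-(\phi(h)-\phi(0))}$. Since $\phi$ is a classical solution it is differentiable at $0$ with $\dot\phi(0)=\tilde F(x)$, so the right-hand side is $o(h)$; dividing by $h$ and taking $\liminf_{h\to 0^+}$ yields $\tilde F(x)\in T_\Cc(x)$. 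This direction needs only differentiability of the solution, not uniqueness or closedness.

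For the ``if'' direction, which is the substantive part, I would run a comparison/Gronwall estimate on the distance-to-$\Cc$ function. Fix $x_0\in\Cc$, let $\phi$ be its unique forward-complete solution, and set $V(t)=\textnormal{dist}(\phi(t),\Cc)$; then $V$ is locally Lipschitz in $t$ (as $\phi$ is $\Cc^1$ and $\textnormal{dist}(\cdot,\Cc)$ is $1$-Lipschitz) and $V(0)=0$, and the goal is $V\equiv 0$ on $[0,\infty)$. On a compact set $K$ containing a trajectory segment, on which $\tilde F$ has Lipschitz constant $L$, I would estimate the upper right Dini derivative of $W:=V^2$: at a time $t$ with $V(t)>0$, pick a nearest point $p_t\in\Cc$ to $\phi(t)$, so for $h>0$, $W(t+h)\le \norm{\phi(t+h)-p_t}^2 = W(t)+2h\langle \phi(t)-p_t,\tilde F(\phi(t))\rangle + o(h)$. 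The vector $\phi(t)-p_t$ is a proximal normal to $\Cc$ at $p_t$ and hence lies in the polar of the contingent cone $T_\Cc(p_t)$; since $\tilde F(p_t)\in T_\Cc(p_t)$ by hypothesis, $\langle \phi(t)-p_t,\tilde F(p_t)\rangle\le 0$, and adding and subtracting $\tilde F(p_t)$ with the Lipschitz bound gives $\langle \phi(t)-p_t,\tilde F(\phi(t))\rangle\le L\norm{\phi(t)-p_t}^2 = LW(t)$. Thus $D^+W(t)\le 2LW(t)$ wherever $W(t)>0$. Combined with $W\ge 0$ continuous and $W(0)=0$, a standard argument closes it: if $W(t_1)>0$, let $t_0$ be the last time before $t_1$ at which $W$ vanishes; on $(t_0,t_1]$ the function $t\mapsto e^{-2Lt}W(t)$ has nonpositive Dini derivative, hence is nonincreasing, forcing $W(t_1)\le \lim_{s\to t_0^+}e^{-2L s}W(s)=0$, a contradiction. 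Patching along the forward-complete trajectory gives $\phi(t)\in\Cc$ for all $t\ge 0$.

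The main obstacle is twofold. First, the geometric heart of the theorem is the nonsmooth-analysis fact that a nearest-point (proximal normal) vector is polar to the Bouligand/contingent tangent cone: if $p$ is a nearest point of the closed set $\Cc$ to $z$ and $v\in T_\Cc(p)$, then $\langle z-p,v\rangle\le 0$; this is exactly what couples the tangent cone hypothesis to the distance estimate, and I would prove it inline from the definitions (using that $\langle z-p, x-p\rangle \le \tfrac12\norm{x-p}^2$ for $x\in\Cc$, then testing against a sequence realizing $v$). Second, the Gronwall step genuinely uses a local Lipschitz constant for $\tilde F$, whereas the theorem assumes only uniqueness of solutions; for merely continuous $\tilde F$ one cannot bound $\langle \phi(t)-p_t,\tilde F(\phi(t))-\tilde F(p_t)\rangle$ linearly in $W(t)$. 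In that generality I would instead, for each $\varepsilon>0$, build an Euler-polygon $\varepsilon$-approximate solution that stays within $O(\varepsilon)$ of $\Cc$ by using the tangent cone condition pointwise, extract a uniformly convergent subsequence whose limit is a genuine solution lying in the closed set $\Cc$, and identify it with $\phi$ by uniqueness. I would present the Lipschitz argument in full, since it already covers the cases of interest here, where $u^*$, and hence $\tilde F = F(\cdot,u^*(\cdot))$, is locally Lipschitz, and cite~\citep{MN:42,FB:99} for the general continuous case.
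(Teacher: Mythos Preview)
The paper does not prove Theorem~\ref{thm:nagumos-theorem}: it is stated as a classical result and attributed to~\citep{MN:42,FB:99}, with no proof given. So there is no ``paper's own proof'' to compare against; the theorem is simply invoked as background.

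Your proposal is a correct and standard route to Nagumo's theorem. The ``only if'' direction is exactly the one-line argument via differentiability of the solution. For the ``if'' direction, your squared-distance Gronwall argument is right for locally Lipschitz $\tilde F$: the key nonsmooth fact that a proximal normal at the nearest point $p_t$ lies in the polar of the contingent cone $T_\Cc(p_t)$ is precisely $\hat N_\Cc(p_t)\supset N_\Cc^P(p_t)$ together with $\hat N_\Cc(p_t)=T_\Cc(p_t)^\circ$, and your inline derivation of $\langle z-p,v\rangle\le 0$ from $\langle z-p,x-p\rangle\le\tfrac12\norm{x-p}^2$ is the clean way to get it. One small point to tighten: when you invoke the Lipschitz constant $L$ on the pair $(\phi(t),p_t)$, you need $p_t$ to lie in the compact set where $L$ is valid; this is automatic once you localize near $t_0$ (the last vanishing time of $W$), since $\norm{\phi(t)-p_t}=V(t)\to 0$ as $t\to t_0^+$, but it deserves a sentence. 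You correctly identify that uniqueness alone (without Lipschitz) does not support the Gronwall step, and your fallback via Euler polygons, Arzel\`a--Ascoli, and identification by uniqueness is exactly Nagumo's original strategy and what the cited references carry out. Given that the paper only cites the theorem, presenting the Lipschitz case in full and deferring the continuous case to~\citep{MN:42,FB:99} is an appropriate choice.
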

\smallskip

The condition that $\tilde{F}(x)\in T_{\Cc}(x)$ for all $x\in\Cc$ is
called the \textit{sub-tangentiality condition}, and can be enforced
using the constraints of an optimization-based feedback controller of
the form \eqref{eq:optim-based-controller}.  We show how in the
following.  Suppose that $\Cc$ is parameterized as
$\Cc = \{x \in \real^n \mid h_j(x) \geq 0,\; 1 \leq j \leq p \}$,
where $h_j:\real^n \to \real$ are continuously differentiable for
$j=1, \dots, p$, and the dynamics take the form
\begin{equation}
  \label{eq:control-affine}
  \dot{x} = F(x, u) = F_0(x) + \sum_{i=1}^{m}u_iF_i(x) ,
\end{equation}
for smooth functions $F_i:\real^n \to \real^n$ for
$i\in\fromto{0}{m}$. Next, define $A(x) \in \real^{p \times m}$ and
$b(x) \in \real^p$ as
\[
  A(x) =
  \begin{bmatrix}
    \Lc_{F_1}h_1(x) & \dots  & \Lc_{F_m}h_1(x) \\
    \vdots & \ddots & \vdots \\
    \Lc_{F_1}h_p(x) & \dots  & \Lc_{F_m}h_p(x)
  \end{bmatrix} ,
\]
\[
  b(x) =
  \begin{bmatrix}
    -\alpha(h_1(x)) - \Lc_{F_0}h_1(x) \\
    \vdots \\ 
    -\alpha(h_m(x)) - \Lc_{F_0}h_m(x)
  \end{bmatrix} ,
\]
where $\alpha$ is a class-$\Kc$ function.  Let $A_j(x)$ denote the
$j$th row of $A(x)$, and for $J \subset \until{p}$, let $A_J(x)$
denote the matrix consisting of the rows of $A(x)$ corresponding to
$j \in J$.

In the literature on optimization-based control design
\citep{ADA-XX-JWG-PT:17}, the feasibility of the system
$A_j(x)u \geq b_j(x)$ for all $x \in \real^n$ such that
$h_j(x) \geq 0$ is equivalent to $h_j$ being a \emph{control barrier
  function} for the set $\{ x \in \real^n \mid h_j(x) \geq 0\}$. Since
we are considering the case where $\Cc$ is possibly parameterized by
multiple inequalities, here we make the stronger assumption that the
system $A(x)u \geq b(x)$ (where the inequality holds component-wise)
is feasible for all~$x \in \Cc$.  In this case, if $\Cc$ satisfies an
appropriate constraint qualification condition (e.g., MFCQ or LICQ)
and $u^*:\real^n \to \real^m$ is a feedback controller such that
$A(x)u^*(x) \geq b(x)$ for all $x \in \Cc$, then the closed-loop
dynamics satisfies the sub-tangentiality condition
$F(x, u^*(x)) \in T_{\Cc}(x)$. Such a controller can be obtained from
the solution of a parametric optimization problem of the
form~\eqref{eq:optim-based-controller} where $g(x, u) = b(x) - A(x)u$.

To show invariance invoking Theorem~\ref{thm:nagumos-theorem}, one
needs to additionally ensure that the closed-loop dynamics has unique
solutions. The conditions discussed in Section
\ref{sec:existence-uniqueness-conditions} and summarized in Table
\ref{tab:1} can be translated into easily checkable conditions on the
objective function, the matrix $A(x)$, and the vector $b(x)$. The following result
uses~\citep[Theorem 3.6]{JL:95} to ensure uniqueness, and therefore
forward invariance.

\begin{theorem}\longthmtitle{Sufficient conditions for forward
    invariance with respect to closed-loop dynamics} 
  Consider the dynamics \eqref{eq:control-affine} and the
  optimization problem \eqref{eq:optim-based-controller} where
  $f \in \Cc^{1,2}(\real^{n} \times \real^m)$ is strictly convex, and
  $g(x, u) = b(x) - A(x)u$. Assume
  \begin{itemize}
  \item For all $x \in \Cc$, there exists $u \in \real^m$ such
    that~$A(x)u > b(x)$, and~\eqref{eq:optim-based-controller} has at least one minimizer.
  \item For all $x \in \Cc$, there is an open set
    $U_x \subset \real^n$ containing $x$ such that, for any subset
    $J$ in $\until{p}$, the matrix $A_J(y)$ has constant rank for
    all~$y \in U_x$.
  \end{itemize}
  Then the closed-loop system under the optimization-based
  controller~\eqref{eq:optim-based-controller} has unique solutions,
  and $\Cc$ is forward invariant.
\end{theorem}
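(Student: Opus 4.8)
The plan is to establish the two conclusions in turn. Uniqueness of closed-loop solutions I would obtain by showing that $u^*$ is locally Lipschitz at every point of $\Cc$, so that the closed-loop vector field is locally Lipschitz there and the Picard--Lindel\"of theorem~\citep[Theorem~2.2]{GT:12-ams} applies. Forward invariance I would then derive from Nagumo's Theorem~\ref{thm:nagumos-theorem}, for which it remains to verify the sub-tangentiality condition $F(x,u^*(x))\in T_\Cc(x)$ for all $x\in\Cc$. Throughout, strict convexity of $f(x,\cdot)$, convexity (indeed affineness) of $g(x,\cdot)=b(x)-A(x)u$ in $u$, and the assumed existence of a minimizer guarantee that $u^*(x)$ is a singleton, so that all the regularity notions below are well posed.

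For the first step, fix $x\in\Cc$. The first assumption is precisely Slater's condition at $x$ for the constraint $A(x)u\ge b(x)$, so, exactly as in the proof of Proposition~\ref{prop:scalar-qp-lipschitz}, convexity of $g(x,\cdot)$ together with~\citep[Prop.~5.39]{NA-AE-MP:20} gives that MFCQ holds at $(x,u^*(x))$. For the constant rank condition, I would observe that $\nabla_u g_i(y,u)=-A_i(y)$ is independent of $u$; hence for any $L\subseteq\Ic(x,u^*(x))$ the family $\{\nabla_u g_i(y,u):i\in L\}$ is, up to sign, the set of rows of $A_L(y)$, which by the second assumption has constant rank throughout $U_x$. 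Taking the neighborhood $U_x\times\real^m$ then shows CR holds at $(x,u^*(x))$, so~\citep[Theorem~3.6]{JL:95} yields that $u^*$ is locally Lipschitz at $x$. As $x$ ranges over $\Cc$ this produces an open set $\Dc\supseteq\Cc$ on which $u^*$ is locally Lipschitz, and therefore so is the composition $x\mapsto F(x,u^*(x))=F_0(x)+\sum_{i=1}^{m}u_i^*(x)F_i(x)$ of $u^*$ with the smooth maps $F_i$; Picard--Lindel\"of then gives a unique solution of~\eqref{eq:closed-loop} through each initial condition in $\Dc$.

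For the second step, take $x\in\Cc$. If $x\in\interior(\Cc)$ then $T_\Cc(x)=\real^n$ and there is nothing to prove, so assume $x\in\partial\Cc$ and let $J(x)=\{j:h_j(x)=0\}$. Picking $\bar u$ with $A(x)\bar u>b(x)$ and noting that $b_j(x)=-\alpha(h_j(x))-\Lc_{F_0}h_j(x)=-\nabla h_j(x)^{\transpose}F_0(x)$ for $j\in J(x)$ (because $\alpha(0)=0$), the vector $w=F(x,\bar u)$ satisfies $\nabla h_j(x)^{\transpose}w=A_j(x)\bar u-b_j(x)>0$ for every $j\in J(x)$; hence $\Cc$ satisfies MFCQ at $x$ and $T_\Cc(x)=\{v\in\real^n:\nabla h_j(x)^{\transpose}v\ge0,\ j\in J(x)\}$. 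Feasibility of $u^*(x)$ gives $A_j(x)u^*(x)\ge b_j(x)$, which, unwinding the definitions of $A$, $b$ and the Lie derivatives, reads $\nabla h_j(x)^{\transpose}\big(F_0(x)+\sum_{i=1}^{m}u_i^*(x)F_i(x)\big)\ge0$ for $j\in J(x)$, i.e.\ $F(x,u^*(x))\in T_\Cc(x)$. Combined with the uniqueness from the previous step, Nagumo's Theorem~\ref{thm:nagumos-theorem} (applied with the closed set $\Cc$ inside $\Dc$) yields forward invariance of $\Cc$.

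The bookkeeping in the last step is routine; the point that needs the most care is matching the hypotheses of Theorem~\ref{thm:nagumos-theorem}, which is stated for systems admitting a unique \emph{forward complete} solution from each initial condition, whereas local Lipschitzness only delivers uniqueness on the maximal interval of existence. One must therefore either assume forward completeness on $\Dc$ or argue, e.g.\ under an additional boundedness hypothesis, that trajectories starting in $\Cc$ cannot escape in finite time. A secondary subtlety is that the Slater-type condition gets used twice---once as MFCQ for the parametric program (to obtain Lipschitzness of $u^*$) and once as MFCQ for the set $\Cc$ (to obtain the tangent-cone formula)---and the computation with $w=F(x,\bar u)$ is exactly what shows these two requirements coincide.
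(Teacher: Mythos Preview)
Your proposal is correct and follows exactly the approach the paper indicates: the paper does not give a formal proof but states just before the theorem that it ``uses~\citep[Theorem 3.6]{JL:95} to ensure uniqueness, and therefore forward invariance'' via Nagumo's Theorem, with the sub-tangentiality condition coming from feasibility of $u^*$ together with a constraint qualification on~$\Cc$. Your verification that the Slater-type hypothesis yields MFCQ for the program, that the constant-rank assumption on $A_J$ yields CR, and that the same Slater point produces an MFCQ direction for $\Cc$ (so the tangent cone has the linearized form) is precisely the intended argument; your caveat about forward completeness is a genuine gap in the theorem as stated, not in your reading of it.
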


In the case where the closed-loop dynamics are point-Lipschitz,
solutions are not necessarily unique and therefore forward invariance
of $\Cc$ cannot be guaranteed by Theorem~\ref{thm:nagumos-theorem}.
In fact, the following is an example of a system where the
sub-tangentiality condition holds but there exist solutions starting
in~$\Cc$ that eventually leave.

\begin{example}\longthmtitle{Point-Lipschitz differential equation
    violating forward invariance}\label{ex:pl-but-no-fwd-inv}
  \rm{Let $\Cc = \setdef{ (x_1,x_2)\in\real^2 }{ x_2 \leq 0 }$ and
    consider the system with the feedback controller defined in
    Example~\ref{ex:point-lip-nonunique}. Because
    $\Cc$ satisfies LICQ, the tangent cone can be computed as
    $T_{\Cc}(x_1,x_2)=\real^2$ if $x_2 < 0$, and
    $T_{\Cc}(x_1, 0)= \setdef{(\xi_1, \xi_2)}{\xi_2 \leq 0}$.  The
    closed-loop system satisfies
    $F(x, u^*(x)) = (\frac{1}{2}, u_4^*(x_1,x_2)) \in
    T_{\Cc}(x_1,x_2)$ for all $(x_1,x_2)\in\Cc$.  However, the
    solution $y_2(t)=(\frac{1}{2}t,\frac{1}{8}t^2)$ satisfies
    $y_2(0) \in \Cc$ and $y_2(t) \notin \Cc$ for all $t > 0$.}  \demo
\end{example}

Example~\ref{ex:point-lip-nonunique} is problematic
because it shows that even if the \textit{sub-tangentiality} condition
for a safe set $\Cc$ is included as one of the constraints of the
optimization-based controller, if the solutions of the closed-loop
system are not unique, some of the solutions might leave the safe
set~$\Cc$.
However, using the notion of minimal barrier
functions~\citep{RK-ADA-SC:21}, the following result gives a condition
for forward invariance that can be applied to systems with non-unique
solutions.

\begin{theorem}\longthmtitle{Minimal Barrier Functions,~\citep[Theorem
    1]{RK-ADA-SC:21}}\label{thm:minimal-bfs} 
  Let $\tilde{F}:\real^n\to\real^n$ be continuous and consider the
  system $\dot{x}=\tilde{F}(x)$.  Let $h:\real^n\to\real$ be a
  continuously differentiable function and let
  $\Cc=\setdef{x\in\real^n}{h(x)\geq0}$ be a nonempty set. If $h$ is a
  minimal barrier function, cf.~\citep[Definition 2]{RK-ADA-SC:21},
  then any solution of $\dot{x}=\tilde{F}(x)$ with initial condition
  in $\Cc$ remains in $\Cc$ for all positive times.
\end{theorem}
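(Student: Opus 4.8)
The plan is to prove Theorem~\ref{thm:minimal-bfs} by a scalar comparison (Gr\"onwall-type) argument on the function $t \mapsto h(x(t))$ along an \emph{arbitrary} solution. This route, rather than an appeal to Nagumo's Theorem~\ref{thm:nagumos-theorem}, is forced by the fact that $\tilde F$ is only continuous, so solutions of $\dot x = \tilde F(x)$ need not be unique (recall Example~\ref{ex:pl-but-no-fwd-inv}), and one must show that \emph{every} solution stays in $\Cc$. Concretely, I would fix a solution $x:[0,T)\to\real^n$ with $x(0)\in\Cc$ and set $\eta(t):=h(x(t))$; since $h$ is continuously differentiable and $x$ is absolutely continuous, $\eta$ is absolutely continuous with $\dot\eta(t) = \nabla h(x(t))^\top \tilde F(x(t))$ for almost every $t$, and $\eta(0)\geq 0$. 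Forward invariance of $\Cc$ along this solution is then equivalent to $\eta(t)\geq 0$ for every $t$ in the forward interval of existence.

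Next I would unpack the definition of a minimal barrier function \citep[Definition 2]{RK-ADA-SC:21} to obtain, for a locally Lipschitz extended class-$\Kc$ function $\alpha$, the differential inequality $\dot\eta(t) \geq -\alpha(\eta(t))$ valid whenever $\eta(t) \leq 0$ and $x(t)$ lies in the sublevel neighborhood of $\partial\Cc$ on which that condition is postulated. Then argue by contradiction: if $\eta(t_1) < 0$ for some $t_1 \in (0,T)$, let $t_0 := \sup\{t \in [0,t_1] : \eta(t) \geq 0\}$, so $\eta(t_0) = 0$ and $\eta < 0$ on $(t_0, t_1]$; shrinking $t_1$ toward $t_0$ if needed (the data are bounded near $t_0$, so $\eta$ can only go slightly negative), the trajectory stays in the relevant sublevel neighborhood on $(t_0,t_1]$ and the differential inequality is in force there. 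Comparing $\eta$ with the solution $y$ of $\dot y = -\alpha(y)$, $y(t_0) = 0$ --- which is identically $0$ by local Lipschitzness of $\alpha$ together with $\alpha(0) = 0$ --- the comparison lemma yields $\eta(t) \geq y(t) = 0$ on $(t_0, t_1]$, contradicting $\eta(t_1) < 0$. Hence $\eta \geq 0$ throughout, i.e., $x(t) \in \Cc$; and since no uniqueness was invoked, the conclusion holds for every solution issuing from $\Cc$.

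The main obstacle is aligning the hypotheses with the precise content of \citep[Definition 2]{RK-ADA-SC:21}: exactly on which sublevel set of $h$ the comparison inequality is assumed, and the regularity of $\alpha$ (extended class-$\Kc$, locally Lipschitz) needed to force $y \equiv 0$ in the comparison step. It is precisely this ``only where $h \leq 0$ near $\partial\Cc$'' feature --- rather than the bare sub-tangentiality condition of Nagumo's theorem --- that makes the argument robust to non-uniqueness of solutions, which is the whole point of the result and the resolution of the pathology in Example~\ref{ex:pl-but-no-fwd-inv}. A secondary technical wrinkle is the version of the comparison lemma invoked: it must handle an absolutely continuous $\eta$ satisfying the inequality only almost everywhere and only on a sublevel set, compared against a scalar ODE whose right-hand side vanishes at the origin. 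Of course, since the statement is quoted verbatim from the literature, the most economical option is simply to cite \citep[Theorem 1]{RK-ADA-SC:21} for the proof.
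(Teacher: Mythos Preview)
The paper does not provide its own proof of this theorem: it is stated as a direct citation of~\cite[Theorem~1]{RK-ADA-SC:21} and no proof environment follows. Your final sentence already anticipates this, and that is exactly the route the paper takes. The comparison-lemma argument you sketch is the standard way such results are established and is essentially what underlies the cited reference, so your proposal is sound; but as far as matching the paper goes, there is nothing to compare beyond the bare citation.
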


A simple scenario in which $h$ is a minimal barrier function is if
there exists a strictly increasing function $\alpha:\real\to\real$
with $\alpha(0)=0$ and an open set $\Dc$ with $\Cc\subset\Dc$ such
that $\nabla h(x)^\top \tilde{F}(x) \geq -\alpha(h(x))$ for all
$x\in\Dc$. Such a set $\Dc$ and class $\Kc$ function $\alpha$ do not exist
in Example~\ref{ex:pl-but-no-fwd-inv}. Since
Theorem~\ref{thm:minimal-bfs} only requires $\tilde{F}$ to be
continuous, the system $\dot{x}=\tilde{F}(x)$ might have multiple
solutions starting from the same initial condition. However, the
result ensures that if the initial condition is in $\Cc$, then all
solutions remain in $\Cc$ for all positive times.  Moreover, since
point-Lipschitz functions are continuous,
Theorem~\ref{thm:minimal-bfs} can be applied to differential equations
defined by point-Lipschitz functions.  Therefore, if one of the
constraints in~\eqref{eq:optim-based-controller} corresponds to the
minimal control barrier function condition
% (cf.~\citep[Definition 3]{RK-ADA-SC:21})
of a function $h$, and if the resulting controller is point-Lipschitz
(e.g., by satisfying the hypothesis of
Proposition~\ref{prop:regularity}), then all solutions of the
closed-loop system that start in
$\Cc:=\setdef{x\in\real^n}{h(x)\geq0}$ remain in $\Cc$ for all
positive times.

Finally, we also note that if $u^*$ is discontinuous, the closed-loop
system might not have unique solutions and hence the assumptions of
Theorem~\ref{thm:nagumos-theorem} will not hold.  Therefore, this
result cannot be used to guarantee forward invariance of sets.
However, the following result gives a sufficient condition for forward
invariance of sets under Filippov solutions.  It follows as an
adaptation of~\citep[Theorem 1]{MM-RGS:21}, which gives a sufficient
condition for forward invariance
%actually it's forward pre-invariance
of sets under hybrid systems.

\begin{theorem}\longthmtitle{Forward invariance under Filippov
    solutions of closed-loop dynamics}\label{thm:fwd-inv-filippov}
  Let $h:\real^n\to\real$ be a continuously differentiable function
  and $\Cc:=\setdef{x\in\real^n}{h(x)\geq0}$. Further let
  $\Dc\subset\real^n$ be a set containing $\Cc$ such that, for each
  initial condition $x_0$ in $\Dc$, there exists a forward complete
  Filippov solution of~\eqref{eq:closed-loop} with initial condition
  at $x_0$.  Let $\Pc(\real^n)$ denote the collection of subsets of
  $\real^n$ and let $\Fc:\real^n\to\Pc(\real^n)$ be the Filippov
  set-valued map of~\eqref{eq:closed-loop}, i.e.,
  \begin{align*}
    \Fc(x) := \bigcap_{\delta>0}\bigcap_{\mu(S)=0} \overline{co}
    \left\{ \bigcup_{y|\norm{y-x}\leq\delta} F(y,u^*(y))\backslash S
    \right\}, 
  \end{align*}
  where $\overline{co}$ denotes the convex closure and $\mu$ denotes
  the Lebesgue measure.  Further assume that there exists a
  neighborhood $\Uc_f$ of $\partial\Cc=\setdef{x\in\real^n}{h(x)=0}$
  such that
  \begin{align}\label{eq:filippov-fwd-invariance-condition}
    \nabla h(x)^T \eta \geq 0, \ \forall x\in \Uc_f\backslash\Cc \
    \text{and} \ \ \forall \eta\in\Fc(x). 
  \end{align}
  Then, all Filippov solutions of~\eqref{eq:closed-loop} with initial
  condition at $\Cc$ remain in $\Cc$ for all positive times.
\end{theorem}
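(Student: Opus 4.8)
The plan is to argue by contradiction, adapting the strategy of~\citep[Theorem 1]{MM-RGS:21} to the Filippov setting. Let $x:[0,\infty)\to\real^n$ be a forward complete Filippov solution of~\eqref{eq:closed-loop} with $x(0)\in\Cc$; such a solution exists by hypothesis since $\Cc\subset\Dc$. Recall that, being a Filippov solution, $x$ is absolutely continuous and $\dot{x}(t)\in\Fc(x(t))$ for almost every $t$. Suppose, for contradiction, that there exists $t_1>0$ with $h(x(t_1))<0$, i.e., $x(t_1)\notin\Cc$. Set $t_0:=\sup\setdef{t\in[0,t_1]}{h(x(t))\geq0}$. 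This set contains $0$ (so it is nonempty) and does not contain $t_1$, hence $t_0\in[0,t_1)$; and since $t\mapsto h(x(t))$ is continuous, we get $h(x(t_0))=0$ (so $x(t_0)\in\partial\Cc$) and $h(x(t))<0$ for all $t\in(t_0,t_1]$.

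Next I would localize near the boundary. Since $\Uc_f$ is a neighborhood of $\partial\Cc$ and $x(t_0)\in\partial\Cc$, there is an open set contained in $\Uc_f$ and containing $x(t_0)$, so by continuity of $x$ there exists $\varepsilon>0$ with $x(t)\in\Uc_f$ for all $t\in[t_0,t_0+\varepsilon]$. Replacing $t_1$ by $\min\{t_1,t_0+\varepsilon\}$, we may assume $x(t)\in\Uc_f$ for every $t\in[t_0,t_1]$. Then for each $t\in(t_0,t_1]$ we have $x(t)\in\Uc_f\setminus\Cc$, so the standing hypothesis~\eqref{eq:filippov-fwd-invariance-condition} applies at $x(t)$: $\nabla h(x(t))^\top\eta\geq0$ for all $\eta\in\Fc(x(t))$.

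The key step is to combine this with the differential inclusion. Define $\varphi(t):=h(x(t))$ on $[t_0,t_1]$. Because $h\in\Cc^1$ (hence locally Lipschitz) and $x$ is absolutely continuous, $\varphi$ is absolutely continuous and, for almost every $t$, $\dot{\varphi}(t)=\nabla h(x(t))^\top\dot{x}(t)$. Since $\dot{x}(t)\in\Fc(x(t))$ a.e., the previous paragraph gives $\dot{\varphi}(t)=\nabla h(x(t))^\top\dot{x}(t)\geq0$ for almost every $t\in(t_0,t_1)$. Integrating, $h(x(t_1))=\varphi(t_1)\geq\varphi(t_0)=h(x(t_0))=0$, contradicting $h(x(t_1))<0$. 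Hence no such $t_1$ exists, so $h(x(t))\geq0$ for all $t\geq0$, i.e., every Filippov solution starting in $\Cc$ remains in $\Cc$ for all positive times.

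I expect the only delicate points to be, first, the justification that $\varphi=h\circ x$ is absolutely continuous with $\dot{\varphi}(t)=\nabla h(x(t))^\top\dot{x}(t)$ a.e.\ (which follows from $h$ being $\Cc^1$ and $x$ absolutely continuous), and second, the careful bookkeeping ensuring the solution lies in $\Uc_f\setminus\Cc$ exactly on the open interval where $\varphi<0$, so that hypothesis~\eqref{eq:filippov-fwd-invariance-condition}, which is posited only on $\Uc_f\setminus\Cc$ and \emph{not} on $\partial\Cc$, can legitimately be invoked at every such point.
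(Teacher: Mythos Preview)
Your argument is correct. The paper does not actually supply a proof of this theorem; it merely states, just before the theorem, that it ``follows as an adaptation of~\citep[Theorem 1]{MM-RGS:21}'' and then moves on. Your proposal carries out precisely that adaptation: a contradiction argument via the last exit time $t_0$ from $\Cc$, localization of the trajectory in $\Uc_f$, and monotonicity of $\varphi=h\circ x$ obtained by combining the differential inclusion $\dot{x}(t)\in\Fc(x(t))$ a.e.\ with the hypothesis~\eqref{eq:filippov-fwd-invariance-condition}. The two points you flag as delicate---the chain rule for the absolutely continuous composition $h\circ x$ with $h\in\Cc^1$, and the care to invoke~\eqref{eq:filippov-fwd-invariance-condition} only on $(t_0,t_1]$ where $x(t)\in\Uc_f\setminus\Cc$ (and not at $t_0$, where $x(t_0)\in\partial\Cc$)---are exactly the right ones, and you handle both. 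One cosmetic remark: the theorem asserts invariance for \emph{all} Filippov solutions, while the standing hypothesis only guarantees existence of at least one forward complete solution per initial condition; your argument in fact shows $h(x(t))\ge 0$ on any interval of definition of any Filippov solution starting in $\Cc$, so it covers this without change.
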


In particular, Theorem~\ref{thm:fwd-inv-filippov} ensures that under
the assumptions of Proposition~\ref{prop:existence-filippov}, and if
Filippov solutions are defined for all positive times,
then the
\textit{sub-tangentiality}-like
condition~\eqref{eq:filippov-fwd-invariance-condition} guarantees
forward invariance of Filippov solutions.  We note also that
Theorem~\ref{thm:fwd-inv-filippov} is possibly conservative, and
tighter conditions that guarantee forward invariance for Filippov
solutions could be developed using an adapted notion of minimal
barrier functions for discontinuous systems.
%This is left as future work.

\section{Conclusions and Outlook}

We have provided an integrative presentation of insights and results
about the regularity properties of optimization-based controllers, and 
their implication in different properties of interest of control systems.

\textit{Regularity properties:} Under appropriate constraint
qualifications and conditions on the data that defines the
optimization problem, we have shown that optimization-based
controllers are locally Lipschitz, continuously differentiable, and
even analytic.  We have also characterized the properties enjoyed by
parametric optimizers arising from optimization problems defined by
second-order continuously differentiable objective function and
constraints, strictly (or strongly)
%
% \marginJC{strictly (or strongly)?}
% 
convex objective
function, and feasible set with nonempty interior (the same properties
as in Robinson's counterexample).  We have shown that, even though
such parametric optimizers might not be locally Lipschitz, they enjoy
other important regularity properties, like point-Lipschitz
continuity.  Even if the optimization-based controller is
discontinuous, under appropriate conditions on the optimization
problem data, we have shown that it is measurable and locally bounded.

\textit{Implications on the resulting closed-loop systems:} When our
results are applied to the motivating examples in
Section~\ref{sec:introduction}, they improve upon the existing results
in the literature by providing a more detailed description of the
regularity of the corresponding controller under a wider range of
conditions.  Building on the results on regularity properties of
optimization-based controllers, we have studied the existence and
uniqueness of classical and Filippov solutions of closed-loop systems
obtained from an optimization-based controller, and identified
conditions ensuring that all (not necessarily unique) solutions remain
in a safe set of interest.
% Locally Lipschitz controllers guarantee existence and uniqueness of
% solutions of the closed-loop system, and controllers satisfying the
% properties in Robinson's counterexample guarantee existence of
% solutions but in general, not uniqueness.  We have also studied
% conditions that guarantee existence of Filippov solutions for
% discontinuous optimization-based controllers.

\textit{Outlook:} The results presented in this work show that the
regularity properties of optimization-based controllers are determined
by the smoothness/convexity and constraint qualification properties of
the optimization problems defining them. This opens the door to the
possibility of designing optimization problems with the appropriate
conditions and constraint qualification properties in order to endow
the associated optimization-based controller with certain desired
regularity properties.  For example, in the context of safety-critical
control, it is sufficient to find safe sets and control barrier
functions for those sets to guarantee that an associated control
barrier function based controller is locally Lipschitz. In certain
cases, one can guarantee that these control objectives are obtained
without continuity or even uniqueness of solutions to the resulting
closed-loop systems. Characterizing conditions on the objective
function and constraints to ensure that control objectives are
achieved even in the absence of local Lipschitz continuity is an
important direction for future work. In particular, understanding
one-sided Lipschitzness in the context of parametric optimization is
important in the context of safety-critical control, since uniqueness
of solutions allows one to verify forward-invariance of a safe set via
Nagumo's Theorem.  Besides the examples mentioned in
Section~\ref{sec:introduction}, the relevance of the results presented
herein also applies to other areas of systems and control, where
controllers need to be designed with desirable regularity properties,
such as backstepping~\citep{MK-IK-PK:95}, where virtual controllers
need to be differentiated at the intermediate layers to construct a
composite Lyapunov or barrier~\citep{MC-PO-GB-ADA:23} function for the
composite system.
% %
% \marginJC{Are there any more points we could add to show
%   significance/relevance of what we've presented for control/opt?}
% %
Other ideas for future work include improving further the results
presented in this paper to more specific classes of problems, such as
CBF-based quadratic programs or second-order convex programs, as well
as MPC-based controllers. We also plan to relax the differentiability
and convexity assumptions considered throughout this paper and give
regularity results for set-valued optimizers using the theory of
continuous selections~\citep{EM:56}.
% Possible reference for last statement:"Ernest Michael and theory of
% continuous selections"

\section*{Acknowledgments}
This work was partially supported by ARL-W911NF-22-2-0231. The authors
would like to thank E. Dall'Anese and Y. Chen for multiple conversations on
optimization-based controllers, S. Ganguly for pointing out typos in
an earlier version of the manuscript, an anonymous reviewer of the
paper~\citep{PM-JC:24-auto} for various comments regarding Robinson's
counterexample, and M. Alyaseen for a discussion regarding
Example~\ref{ex:unbounded-no-SC}.

%\vspace{-2.5ex}
\bibliography{../bib/alias,../bib/JC,../bib/Main-add,../bib/Main}
\bibliographystyle{IEEEtran}

\end{document}